\documentclass[10pt,twoside,web]{ieeecolor}
\usepackage{generic}
\usepackage[labelfont=bf]{caption}
\usepackage{subcaption}
\usepackage{bigints}
\usepackage{abraces}
\usepackage{bbold}
\usepackage[noadjust]{cite}

\usepackage{amsmath,amssymb,amsfonts}
\usepackage{algorithmic}
\usepackage{graphicx}
\usepackage{textcomp}
\usepackage{color}
\definecolor{darkgreen}{rgb}{0,0.4,0}
\usepackage{float}
\definecolor{darkgreen}{rgb}{0,0.6,0}
\usepackage{authblk}
\usepackage[colorlinks,linkcolor=blue,citecolor=darkgreen]{hyperref}
\usepackage[latin1]{inputenc}
\usepackage{amsmath}
\usepackage{graphicx}
\usepackage{amssymb}
\usepackage{verbatim}
\usepackage{epsfig}
\usepackage{enumerate}
\usepackage{epstopdf}
\usepackage{relsize,exscale}
\usepackage{epstopdf}
\usepackage{float}
\usepackage{tikz-cd, circuitikz}
\tikzset{global scale/.style={
    scale=#1,
    every node/.append style={scale=#1}
  }
}

\usepackage{booktabs} 

\newtheorem{definition}{Definition}
\newtheorem{assumption}{Assumption}
\newtheorem{lemma}{Lemma}
\newtheorem{remark}{Remark}

\newtheorem{proposition}{Proposition}

\newtheorem{problem}{Problem}
\newtheorem{condition}{Condition}

\def\begquo{\begin{quote}}
\def\endquo{\end{quote}}
\def\begequarr{\begin{eqnarray}}
\def\endequarr{\end{eqnarray}}
\def\begequarrs{\begin{eqnarray*}}
\def\endequarrs{\end{eqnarray*}}
\def\begarr{\begin{array}}
\def\endarr{\end{array}}
\def\begequ{\begin{equation}}
\def\endequ{\end{equation}}

\def\begdes{\begin{description}}
\def\enddes{\end{description}}
\def\begenu{\begin{enumerate}}
\def\begite{\begin{itemize}}
\def\endite{\end{itemize}}
\def\endenu{\end{enumerate}}

\def\lef[{\left[\begin{array}}
\def\rig]{\end{array}\right]}
\def\begcen{\begin{center}}
\def\endcen{\end{center}}
\def\begrem{\begin{remark}\rm}
\def\endrem{\end{remark}}
\def\begdef{\begin{definition}}
\def\enddef{\end{definition}}
\def\begpro{\begin{proposition}}
\def\endpro{\end{proposition}}
\def\begfac{\begin{fact}}
\def\endfac{\end{fact}}
\def\begass{\begin{assumption}}
\def\endass{\end{assumption}}
\def\begsubequ{\begin{subequations}}
\def\endsubequ{\end{subequations}}

\def\begmat#1{\begin{bmatrix}#1\end{bmatrix}}

\def\begalis#1{\begin{align*}{#1}\end{align*}}

\def\cale{{\cal E}}
\def\cali{{\cal I}}
\def\calo{{\cal O}}
\def\calh{{\cal H}}

\def\cale{{\cal E}}

\def\calx{\mathcal{X}}

\def\phil{\phi^{\mathtt{L}}}

\def\L2e{{\cal L}_{2e}}

\def\rea{\mathbb{R}}

\def\diag{\mbox{diag}}

\def\diag{\mbox{diag}}
\def\rank{\mbox{rank}\;}

\def\min{{\mbox{min}}}

\def\clo{{\rm cl}}

\def\TAC{{\it IEEE Trans. Autom. Control}}

\def\IJC{{\it Int. J. Control}}
\def\CDC{{\it IEEE Conf. Decis. Control}}
\def\SCL{{\it Syst. Control Lett.}}
\def\AUT{{\it Automatica}}

\usepackage{color}

\def\QED{\hfill $\square$}

\usepackage[prependcaption,colorinlistoftodos]{todonotes}

\usepackage{tikz,xcolor,hyperref}
\usetikzlibrary{decorations.markings}

\definecolor{lime}{HTML}{A6CE39}
\DeclareRobustCommand{\orcidicon}{
	\begin{tikzpicture}
	\draw[lime, fill=lime] (0,0) 
	circle [radius=0.16] 
	node[white] {{\fontfamily{qag}\selectfont \tiny ID}};
	\draw[white, fill=white] (-0.0625,0.095) 
	circle [radius=0.007];
	\end{tikzpicture}
	\hspace{-2mm}
}

\usepackage{pgfplots}
\pgfplotsset{compat=1.18}

\foreach \x in {A, ..., Z}{\expandafter\xdef\csname orcid\x\endcsname{\noexpand\href{https://orcid.org/\csname orcidauthor\x\endcsname}
			{\noexpand\orcidicon}}
}

\input epsf

\def\BibTeX{{\rm B\kern-.05em{\sc i\kern-.025em b}\kern-.08em
    T\kern-.1667em\lower.7ex\hbox{E}\kern-.125emX}}
\begin{document}

\title{On the Solvability of Parameter Estimation-Based Observers for Nonlinear Systems}
\author{\textsf{Bowen Yi\orcidA{}\emph{, Member, IEEE}, Leyan Fang\orcidC{}, and Romeo Ortega\orcidB{}\emph{, Life Fellow, IEEE}} 
%
\thanks{This work was supported in part by the Natural Sciences and Engineering Research Council of Canada (NSERC) under Grant RGPIN-2024-0478 and by the Programme PIED from Polytechnique Montr\'eal. \emph{(Corresponding author: Bowen Yi)}}

\thanks{B. Yi is with the Department of Electrical Engineering, Polytechnique Montr\'eal and GERAD, Montr\'eal, QC H3T 0A3, Canada \texttt{\small bowen.yi@polymtl.ca}}
\thanks{L. Fang and R. Ortega are with the Department of Electrical and Electronic Engineering, ITAM, Mexico City, 01080 Mexico \texttt{\small fangleyan0215@163.com; romeo.ortega@itam.mx}
}
}

\maketitle

\begin{abstract}
Parameter estimation-based observer (PEBO) is a recently developed constructive tool to design state observers for nonlinear systems. It reformulates the state estimation problem as one of online parameter identification, effectively addressing many open estimation challenges in practical applications. The feasibility of a PEBO design relies on two fundamental properties: transformability and identifiability. The former pertains to the existence of an injective solution to a suitable partial differential equation, whereas the latter characterizes the uniqueness of the parameterization induced by the resulting nonlinear regression model. In this paper, we analyze the existence of PEBOs for general nonlinear systems by studying these two properties in detail and by providing sufficient conditions under which they hold.
\end{abstract}

\begin{IEEEkeywords}
nonlinear system, observer design, parameter estimation-based observer (PEBO), observability
\end{IEEEkeywords}

%
\section{Introduction}
%
State estimation is a fundamental problem across almost all engineering domains. 
It aims at reconstructing unknown variables of interest by combining \emph{a priori} information---typically in the form of mathematical models and observation functions---with \emph{a posteriori} measurements obtained from sensors. 
In control engineering, particular attention is devoted to the estimation of internal states of dynamical systems described by state-space differential equations using partial and noisy observations.

State estimation algorithms are commonly referred to as state observers. In the deterministic setting, existing observer designs can be broadly classified into two main categories: (a) optimization-based approaches and (b) recursive, or filtering-based, approaches \cite{BES07, HASRAW}. Optimization-based approaches formulate the estimation problem as the solution of a constrained optimization problem over a finite or infinite time horizon, typically by fitting the measured input--output data to a dynamical model. In this framework, the unknown state trajectory is often collected into an extended decision variable, e.g., $\mathbf{x} = [x(t_1), \ldots, x(t_q)] \in \rea^{n \times q}$, where $x(t) \in \rea^n$ is the systems state vector.  Representative examples include full-information estimation (FIE) \cite{ALLRAW} and moving-horizon estimation (MHE) \cite{SCHetal23}.  These methods are highly flexible and have become particularly popular in robotics \cite{BAR24}; however, they usually entail a significant computational burden, which may limit their applicability in real-time and embedded settings.

In contrast, recursive or filtering-based approaches rely on the design of a dynamical system that generates state estimates in real time by continuously processing incoming measurements. This class includes the pioneering work of Luenberger \cite{LUE}, Kalman filter \cite{KAL}, high-gain observers \cite{KHA07}, Kazantzis--Kravaris--Luenberger (KKL) observers \cite{KAZKRA98,ANDPRA}, and immersion and invariance (I\&I) observers \cite{ASTKARORTbook,KARetal}. See \cite{BERetal22} for a comprehensive review of nonlinear observer designs. These methods are computationally efficient and naturally suited for online implementation. 
However, their applicability typically hinges on stringent structural conditions, which are often difficult to verify for general nonlinear systems.

Recently, the parameter estimation-based observer (PEBO) framework has been proposed as a novel alternative that \emph{bridges these two paradigms} \cite{ORTetal15}. Instead of directly estimating the state, PEBO reformulates the state observation problem as that of identifying an unknown constant parameter through a suitable dynamic extension. As a result, PEBO retains a recursive observer structure by introducing a dynamic extension, while ultimately relying on the solution of a parameter identification or optimization problem. This distinctive feature enables the systematic incorporation of tools from adaptive control and system identification into observer design, thereby inheriting some of the flexibility of optimization-based approaches without sacrificing the real-time nature of recursive observers.

A significant generalization of PEBO was reported in \cite{ORTetal21}, introducing the so-called generalized PEBO (GPEBO), which extended the methodology from systems with measurable derivative of the state to the more general {\em state-affine} systems \cite{BER}. The reader is referred to  \cite{ORTetal21} and discussion {\bf D1} in Section \ref{sec:5} for additional details on this extension. The PEBO  framework has been extended in several directions, including systems evolving on matrix Lie groups \cite{yi2024pebo, yi2023attitude, yi2022globally}, non-uniformly observable systems \cite{WANetal23}, systems with delayed measurements \cite{BOBetal21} as well as adaptive observers and reduced-order designs \cite{BOBetal22}.  Moreover, PEBO-based techniques have found successful applications in a variety of domains, such as electrical drives \cite{HEetal}, power systems \cite{BOBetal}, and robotic navigation \cite{WENetal25, WULIU}, to name just a few. Despite its conceptual appeal and empirical success, the fundamental question of when a nonlinear system admits a PEBO design remains largely open.

In \cite{ortega2024immersion} a heuristic procedure to transform a class of nonlinear systems into state-affine ones---hence, amenable for a GPEBO design---is proposed. For general nonlinear systems the feasibility of PEBO relies on two key properties: \emph{transformability}, which concerns the existence of a suitable invertible coordinate transformation---equivalently, the solvability of a class of partial differential equations (PDEs) admitting injective solutions over time---and \emph{identifiability}, which pertains to the uniqueness of the induced regression model that is used for the identification of the unknown parameters.  At present, these properties are typically verified on a case-by-case basis, and no general solvability results are available. This paper aims to fill this gap by establishing a systematic \emph{existence} result for PEBOs in general nonlinear systems, providing explicit characterizations of both transformability and identifiability conditions.

\emph{Organization.} The remainder of this paper is organized as follows. 
In Section~\ref{sec:2}, we formulate the state estimation problem and briefly review the PEBO framework, highlighting the two key properties of transformability and identifiability. 
In Section~\ref{sec:3}, we study the transformability property and characterize it in terms of the solvability of a class of PDEs, for which constructive solutions are provided. 
Section~\ref{sec:4} is devoted to the identifiability analysis of the induced nonlinear regression model, where sufficient conditions for both global and local identifiability are established. 
Several discussions and extensions are presented in Section~\ref{sec:5}. 
An illustrative example is given in Section~\ref{sec:6} to demonstrate the proposed results. 
Finally, concluding remarks are drawn in Section~\ref{sec:7}.

\emph{Notation.} $|\cdot|$ represents the Euclidean norm. All mappings and functions are assumed to be at least $C^1$, and we use $C^1$ to denote a function continuously differentiable.. We use $\mathbb{C}$ to represent the complex plane, and $\mathbb{C}_{>0}$ ($\mathbb{C}_{<0}$) for the open right (left) half-plane. For a forward complete system $\dot x = f(x,t)$, let $X(x,t;s)$ denote its unique solution at time $s$, with the initial condition $x$ at time $t$.  $e_j$ is the canonical basis vector in the Euclidean space of appropriate dimension. For a matrix $A \in \rea^{n\times n}$, we use $\lambda_i(A)$ to represent all its eigenvalues. The notation $\mathbf{1}_n$ represents the $n$-dimensional column vector whose entries are all equal to one. When clear from the context, the arguments of functions and operators are omitted. 

A preliminary version of this article was submitted for possible presentation at the conference \cite{CDC}, containing the main results of Section \ref{sec:3}.
%
\section{Problem Formulation}
\label{sec:2}
%
Consider nonlinear time-varying systems of the form
\begin{equation}\label{NLsyst}
\begin{aligned}
\begin{aligned}
        \dot x &= f(x,t) 
        \\
        y &= h(x),
\end{aligned}
\end{aligned}
\end{equation}
with the state $x(t)\in \rea^n$ and the output $y(t)\in \rea^p$ that are measured from sensors, and $f$ and $h$ are known smooth vector fields. We are interested in the online estimation  of the state $x(t)$.

Note that, for the purpose of state estimation, the above systems also encompass control systems of the form $\dot x = f(x,u(t))$, $y= h(x)$ by viewing the input $u(t)$ as an external signal. 

We make the following assumptions.

\begin{assumption}
\rm \label{assm:1}
The system \eqref{NLsyst} is forward complete. The state $x(t)$ is uniformly bounded and lives in an open set $\calx \subset\rea^{n}$ for all $t\ge0$.
\end{assumption}

\begin{assumption}
\rm \label{assm:2} (\emph{Distinguishability})
The system \eqref{NLsyst} is backward distinguishable on $\cal X$ within $[0, t_\star]$, \emph{i.e.}, for any $t\ge t_\star$ and any pair $(x_a, x_b) \in \mathcal{X}^2$,
\begin{equation*}
\begin{aligned}
        Y(x_a, t;s) = Y(x_b, t;s),\; \forall s \in [t-&t_\star ,t] 
\; \implies \;  x_a = x_b,
\end{aligned}
\end{equation*}
with $Y(x,t;s):=h(X(x,t;s))$.
\end{assumption}

\begin{assumption}
\rm \label{assm:back}
The solutions to the system \eqref{NLsyst} that are initiated from $\calx$ do not blow-up in finite backward time.
\end{assumption}

\vspace{.1cm}

Distinguishability can be viewed as a specific form of observability for nonlinear systems. Intuitively, it means that by using the measured outputs over a backward time interval, one can uniquely determine the state $x$ at the current moment among all possible states in $\calx$. See \cite{BER} for additional discussion on these assumptions.
%
\subsection{Background Material on PEBO}
%
To estimate the state $x(t) \in \rea^n$ in real time, \cite{ORTetal15} proposed the novel approach, referred to as PEBO, which reformulates the problem as the online identification of certain constant parameters---which correspond to the initial condition of the state. Below, we briefly outline the PEBO design via highlighting the two key properties of the given system that are required for its implementation.

\subsubsection{Transformability}
About the system \eqref{NLsyst}, a key step of PEBO is to look for a left invertible, $C^1$ mapping 
$$
\phi:(x,t)\mapsto z
$$ 
such that after a change of coordinate the dynamics is given by the \emph{canonical form}
\begin{equation}
\label{eq:dot_z}
    \dot z = \beta(y,t)
\end{equation}
for some function $\beta(\cdot,\cdot)$ and $z(t)\in \rea^{n_z}$ with $n_z \ge n$. For convenience, we sometimes use the notation $\phi_t(x)$ for $\phi(x,t)$. Given a function $\beta$, the solvability of the PDE
    \begin{equation}\label{pde}
      {\partial \phi \over \partial t}(x,t) +  {\partial \phi \over \partial x}(x,t)f(x,t) = \beta(h(x),t)
    \end{equation}
is a {\em necessary and sufficient} condition for transforming the nonlinear dynamics \eqref{NLsyst} into the canonical form \eqref{eq:dot_z}.

Since we are interested in estimating the state in the original $x$-coordinate, we should be able to pull it back from $z$-coordinate, and thus impose the left invertibility of the mapping $\phi(x,t)$ with respect to $x$. This is equivalent to the existence of its left inverse, i.e., we need another mapping $\phil(\cdot,t): z\mapsto x$ satisfying 
\begin{equation*}
    \phil\left( \phi(x,t) ,t \right) = x, \quad \forall x \in \calx
\end{equation*}
at any instant $t$. More precisely, the left inverse can be numerically computed as
\begin{equation}
	\phil(z,t) := \underset{x \in \mbox{cl}(\calx)}{\arg\min} ~\big| z - \phi(x,t) \big|^2,
\end{equation}
whenever the minimizer is \emph{unique}.

From the above, if we obtain a consistent estimate $\hat z$ of $z$, then the state in the $x$-coordinate can be reconstructed as $\hat x= \phil(\hat z,t)$.
\subsubsection{Identifiablity}
Noting the time derivative of state $z$ is an \emph{available} signal from sensors\footnote{Hereafter, a derivative is said to be available if the right-hand side of the corresponding differential equation (e.g., the function $\beta(y,t)$) depends solely on measurable signals.}, we design the following dynamic extension 
\begin{equation}
\label{eq:dot_zeta}
    \dot \zeta = \beta(y,t) , \quad \zeta(0) = \zeta_0,
\end{equation}
where the observer's initial condition $\zeta_0 \in \rea^{n_z}$ can be selected by the user. Obviously, as ${d\over dt}(\zeta - z)=0$, there exists a constant vector $\theta \in \rea^{n_z}$ such that 
\begin{equation}
\label{eq:phi_zeta_theta}
    \phi(x,t) = \zeta +\theta,
\end{equation}
which translates the estimation of $z(t)$ into the one of $\theta$---hence, the qualifier PEBO. 

To carry out the aforementioned parameter estimation objective, it is necessary to dispose of a {\em regression equation}, namely a relation of the form
\begin{equation}
\label{eq:regressor}
\calh(t)=\Phi(t,\theta),
\end{equation}
where both $\calh(\cdot)$ and $\Phi(\cdot,\theta)$ are known functions that depend only on the available signals $y$, $\zeta$, and time $t$.


\begin{remark}
\rm 
For example, by substituting $x= \phil(\zeta+\theta ,t)$ into the output function $h(x)$, we have the nonlinearly parameterized regressor 
$
y = h(\phil(\zeta+\theta,t)).
$
The simplest case being the well-known linear regression equation given as
$
y(t)=m^\top(t) \theta,
$
with some $m(t) \in \rea^{n_z}$. Such scenarios are frequently encountered in the modeling of electrical and mechanical systems \cite{ORTetal15}. \hfill $\triangleleft$
\end{remark}

The last step is to find a consistent estimate of $\theta$, denoted as $\hat\theta \in \rea^{n_z}$. It can be obtained via solving the optimization problem
\begin{equation}
\label{eq:J}
\underset{\hat\theta \in \rea^{n_z}}{\min} ~ J(\calh(t), \Phi(t,\hat\theta))
\end{equation}
where the cost function $J(\cdot,\cdot)$ is selected to achieve its minimum at $\hat\theta = \theta$. For example, a simple choice is to formulate it as a least squares problem, i.e., $J=\|\calh(t) - \Phi(t,\theta)\|^2_{L_{2}}$. If this parameter identification problem is solvable and yields a consistent estimate $\hat\theta$, the estimate of $x$ is then given by 
\begin{equation}
\label{eq:hat_x}
    \hat x(t) = \phil(\zeta + \hat\theta,t).
\end{equation}
Here, $\hat z := \zeta + \hat \theta$ can be viewed as an estimate of $z$ in the transformed coordinate.

\begin{remark}
\rm
In many cases, we prefer to design a recursive parameter estimator to solve the optimization problem \eqref{eq:J} such that $\hat \theta(t) \to \theta$ as $t\to+\infty$. This may improve robustness with respective to unavoidable measurement noise. Typical examples include gradient descent and recursive least-squares estimators \cite{SASBOD}; see \cite{ORTetal2020ARC} for survey of recent parameter estimation techniques. Then, if the left inverse function $\phil$ is continuous with respect to $z$, then $\hat x$ in \eqref{eq:hat_x} is also an asymptotically convergent estimate.   
\hfill $\triangleleft$
\end{remark}

In Fig. \ref{fig:pebo_diagram}, we briefly illustrate the signal flow in the PEBO approach for state estimation of nonlinear systems.

\begin{figure}[!htp]
\centering
\resizebox{.42\textwidth}{!}{%
\begin{circuitikz}
\tikzstyle{every node}=[font=\LARGE]
\draw[fill=blue!10!white, draw=blue!80!black] (6.25,16.75) rectangle (9.75,15.25); 
\draw[fill=orange!15!white, draw=orange!70!black] (6.25,14) rectangle (9.75,12.5); 
\draw[fill=teal!10!white, draw=teal!60!black] (12.25,16.75) rectangle (16.5,15.25); 

\draw[->, >=Stealth] (3,16) -- (6.25,16);         
\draw[->, >=Stealth] (9.75,16) -- (12.25,16);     
\draw[->, >=Stealth] (8,15.25) -- (8,14);         
\draw[short] (4.5,16) -- (4.5,13.25);             
\draw[->, >=Stealth] (4.5,13.25) -- (6.25,13.25); 
\draw[short] (9.75,13.25) -- (14,13.25);          
\draw[->, >=Stealth] (14,13.25) -- (14,15.25);    
\draw[->, >=Stealth] (16.5,16) -- (18,16);        

\node at (8.5,14.75) {$\zeta$};
\node at (11.25,16.5) {$\zeta$};
\node at (12.25,13.75) {$\hat \theta$};
\node at (17.25,16.5) {$\hat{x}$};

\node at (8,16) {$\dot \zeta = \beta(y,t)$};
\node at (8,13.25) {$\hat\theta \in \underset{\theta}{\arg\min J}$};
\node at (4,16.5) {$y$};
\node at (14.35,16) {$\hat{x} = \phil(\zeta + \hat \theta, t)$};

\end{circuitikz}
}%
\caption{Signal flow diagram of the PEBO}
\label{fig:pebo_diagram}
\end{figure}
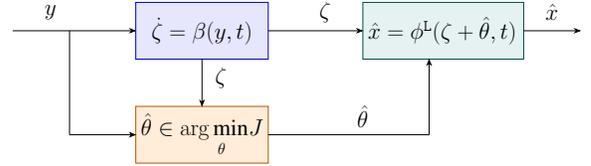

\subsection{Summary of the problem setting}
\label{sec:2b}
From the above discussion, given a nonlinear system of the form \eqref{NLsyst}, the feasibility of a designing a PEBO relies on the following two key conditions.
\begin{itemize}
    \item[\bf C1] \emph{Transformability}: The partial differential equation \eqref{pde} admits an \emph{injective} solution $\phi$ on $\mathcal{X}\times [0,+\infty)$ for a suitably chosen mapping $\beta$.

    \item[\bf C2] \emph{Identifiability}: It is possible to obtain a regression model of the form \eqref{eq:regressor} that is identifiable with respect to $\theta$.
\end{itemize}

It is natural to ask for which class of nonlinear systems in the form of \eqref{NLsyst} the above conditions are satisfied. In this paper, we are give an answer to this question providing a solution to the following problem.

\begin{problem}
\rm 
For the given nonlinear system \eqref{NLsyst} under Assumptions \ref{assm:1}-\ref{assm:back}, we seek to determine the conditions under which {\bf C1} and {\bf C2} are feasible.
\end{problem}
%
\section{Transformability to Canonical Form}
\label{sec:3}
%
In this section, we study the conditions that allow the nonlinear system \eqref{NLsyst} to be transformed into the canonical form \eqref{eq:dot_z}. This transformation is intrinsically linked to the solvability of the PDE \eqref{pde}, for which the solution $\phi(\cdot, t)$ possesses a left inverse that holds over time.
\subsection{PDE Solvability}
%
In \cite[Sec 2.5]{ORTetal15}, it discusses the {local} solvability of the PDE \eqref{pde} for the case that the function $\phi$ does not depend on time $t$. It follows that the $i$-th element of the function $\beta$, denoted as $\beta_i(h(x),t)$, is selected to satisfy the rank condition around some point $x_\star \in \calx \subset \rea^n$
\begin{equation}
    \rank\begmat{f(x_\star,t) \\ \beta_i \big(h(x_\star),t \big)} =1,\quad  \forall t\ge 0
\end{equation}
for $i=1, \ldots, n_z$.

Under the above condition, the Frobenius theorem \cite[Ch 8.3]{SASbook} guarantees the existence of a solution $\phi$ to the PDE \eqref{pde} in a neighborhood of the point $x_\star$. This construction can be performed element-wise, as detailed in \cite{CHEetal}. However, such a solution is inherently local and does not have an analytic form suitable for numerical implementation. Additionally, it does not ensure the injectivity of $\phi(\cdot,t)$, a key property for constructing the regression model \eqref{eq:regressor} and mapping the estimate $\hat z =\zeta+\hat\theta$ back to the original $x$-coordinate that we are interested in.

To address the problem, we show below the PDE \eqref{pde} is solvable in the entire set $\calx$ by choosing a specific structure for the function  $\beta$.

\begin{proposition}[Solvability of the PDE]\rm
\label{prop:1}
Consider the system \eqref{NLsyst} with the $C^1$-smooth vector fields $f$ and $h$ that satisfy Assumption \ref{assm:1}. For any Hurwitz matrix $A \in \rea^{n_z \times n_z}$, any matrix $B \in \rea^{n_z\times p}$, and any smooth function $H:\rea^p \times \rea_{\ge 0} \to \rea^p$, there exists a $C^1$ solution of $\phi(x,t)$ to the PDE \eqref{pde} on $\mathcal{X}\times [0,+\infty)$ by choosing
    \begin{equation}
        \beta(h(x),t) = \exp(-At) BH(h(x),t).
    \end{equation}
    In particular, the function
    \begin{equation}
    \label{eq:H}
    H(h(x),t) = e^{-\rho t}h(x),
    \end{equation}
    with $\rho > -\min\{\mbox{Re}(\lambda_i(A))\}$, ensures that the boundedness of the state $\zeta(t)$, generated by the dynamic extension \eqref{eq:dot_zeta}, for all $t\ge 0$.
\end{proposition}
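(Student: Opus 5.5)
We will construct an explicit solution along trajectories, in the spirit of KKL designs. Fix any Hurwitz $A$, any $B$ and any smooth $H$, and set $\beta(y,t)=\exp(-At)BH(y,t)$. Using the flow $X(x,t;s)$ of \eqref{NLsyst}, which is well defined for $s\in[0,t]$ by forward completeness (Assumption \ref{assm:1}) and uniqueness of solutions, we propose the candidate
\begin{equation*}
\phi(x,t) := \int_0^t \exp(-As)\, B\, H\big(h(X(x,t;s)),s\big)\, ds .
\end{equation*}
Only the finite backward horizon $[0,t]$ is used, so no convergence condition on $A$ is needed at this stage. Regularity follows from the $C^1$ dependence of $X(x,t;s)$ on $(x,t)$, which is standard for $C^1$ vector fields, and from differentiation under the integral sign on the compact interval $[0,t]$. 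Hence $\phi$ is $C^1$ on $\calx\times[0,+\infty)$.

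To verify \eqref{pde}, we evaluate $\phi$ along a solution $x(t)=X(x_0,0;t)$ with $x_0\in\calx$. By the semigroup property, $X(x(t),t;s)=x(s)$, so
\begin{equation*}
\phi(x(t),t)=\int_0^t \exp(-As) B H(h(x(s)),s)\,ds .
\end{equation*}
Differentiating in $t$ gives $\frac{d}{dt}\phi(x(t),t)=\exp(-At)BH(h(x(t)),t)$, and the left-hand side equals $\frac{\partial\phi}{\partial t}+\frac{\partial\phi}{\partial x}f$ evaluated at $(x(t),t)$. Every point $(x,t)$ with $x\in\calx$ lies on such a trajectory, since we can take $x_0=X(x,t;0)$. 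Strictly, this requires $X(x,t;0)$ to exist, which Assumption \ref{assm:back} guarantees. One could also use the general fact that the identity $\frac{d}{dt}\phi = \beta$ along all trajectories through $(x,t)$ is equivalent to the PDE. The pointwise identity \eqref{pde} therefore holds.

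The main subtlety is exactly this step: making sure that the backward flow from every point of $\calx$ reaches time $0$, and that the directional derivative along the flow equals the left-hand side of \eqref{pde}. The first part relies on Assumption \ref{assm:back}. The second part follows from the chain rule once $\phi$ is known to be $C^1$. We will not claim injectivity here; that is treated separately.

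For the boundedness claim, take $H=e^{-\rho t}h(x)$. Then
\begin{equation*}
\zeta(t)=\zeta_0+\int_0^t e^{-(A+\rho I)s}B\,y(s)\,ds .
\end{equation*}
Since $y=h(x)$ with $x(t)$ uniformly bounded (Assumption \ref{assm:1}) and $h$ continuous, $y$ is bounded. The eigenvalues of $-(A+\rho I)$ are $-\lambda_i(A)-\rho$. Their real parts are negative when $\rho>-\min \mathrm{Re}\,\lambda_i(A)$, because then $\mathrm{Re}\,\lambda_i(A)+\rho>0$ for every $i$. This gives $\|e^{-(A+\rho I)s}\|\le c\,e^{-\epsilon s}$ for some $c,\epsilon>0$, so the integral is bounded uniformly in $t$, which proves boundedness of $\zeta$.
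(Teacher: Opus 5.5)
Your proposal is correct and follows the paper's proof. Your $\phi(x,t)=\int_0^t e^{-As}BH(h(X(x,t;s)),s)\,ds$ is exactly the paper's $e^{-At}T(x,t)$, with $T$ the time-varying KKL map; you just verify \eqref{pde} directly along trajectories via the semigroup property rather than through the intermediate PDE for $T$ and the change of coordinates $z=e^{-At}\xi$, and your boundedness argument for $\zeta$ is the paper's. One slip: forward completeness does not make $X(x,t;s)$ well defined for $s\in[0,t]$. That backward existence comes from Assumption~\ref{assm:back}, which you, like the paper (even though the statement lists only Assumption~\ref{assm:1}), correctly invoke afterwards.
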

\begin{proof}
The proof is built upon two coordinate transformations, one of which is the time-varying version of the KKL observers \cite[Sec. II]{BERAND}.

Invoke that the solution value of the system \eqref{NLsyst} at time $s$, initialized from the value $x$ at time $t$, is represented as $X(x,t;s)$. We consider an ``alternative output'' 
$$
y_{a} = H(h(x),t) 
$$
of the given plant, and note that $y_a$ is an ``available'' signal. We define the evolution of this output as 
$$
Y_a(x,t;s) := H(h(X(x,t;s)), s).
$$
According to \cite[Lem. 1]{BERAND}, under Assumption \ref{assm:back} the function 
\begin{equation}
    T(x,t) = \int_0^t \exp(A(t-s)) BY_a(x,t;s)ds
\end{equation}
is a $C^1$ solution to the PDE 
\begin{equation}
\label{pde:T}
    {\partial T \over \partial x}(x,t) f(x,t) + {\partial T \over \partial t}(x,t) 
    =
    A T(x,t) + BH(h(x),t)
\end{equation}
on $\calx \times [0,+\infty)$. By defining 
$$
\xi:= T(x,t),
$$
it leads to
\begin{equation}
    \dot \xi = A\xi + B y_a(t).
\end{equation}

Now, we apply the second change of coordinate 
$$
\xi \mapsto z:  \quad z =\psi(\xi,t):=  e^{-At} \xi,
$$
and have the following
\begin{equation*}
\begin{aligned}
    \dot z & = - A e^{-At} \xi  + e^{-At} (A\xi + By_a)
    \\
    & = e^{-At} BH(h(x),t),
\end{aligned}
\end{equation*}
as $e^{-At}$ commutes with $A$.

Define the function $\phi: (x,t)\mapsto z$ as the composite function of $\psi$ and $T$. Namely,
\begin{equation}
\label{eq:phi}
\phi(x,t) = \psi( T(x,t), t),
\end{equation}
which is a feasible solution to the PDE \eqref{pde} with
$$
\beta (h(x),t) = \exp(-At) BH(h(x),t).
$$
To be precise,
\begin{equation}
\begin{aligned}
    & {\partial \phi \over \partial t}(x,t) + {\partial \phi \over\partial x}(x,t) f(x,t) 
    \\
    =~ &
    -Ae^{-At} T(x,t) + e^{-At} 
    \left({\partial T \over\partial t}(x,t) 
    +
    {\partial T\over \partial x}(x,t)f(x,t)
    \right)
    \\
    =~ &
    -Ae^{-At} T(x,t) + e^{-At} 
    \left( AT(x,t) + BH(x,t)
    \right)
    \\
     =~&
    \beta(h(x),t).
\end{aligned}
\end{equation}
We have verified that $\phi$ in \eqref{eq:phi} is a feasible $C^1$ solution to the PDE \eqref{pde}.

Following the above, given a particular $H$ in \eqref{eq:H}, it satisfies the PDE \eqref{pde} as well. On the other hand, the dynamically extended variable $\zeta$ in \eqref{eq:dot_zeta} satisfies
\begin{equation}
\begin{aligned}
    \zeta(t) 
    & 
    =
   \zeta(0) +  \int_0^t \beta(y(\tau), \tau) d\tau
    \\
    &=
   \zeta_0 +  \int_0^t \exp(-A\tau)B H(h(x(\tau)), \tau)  d\tau   
    \\
    & 
    = 
    \zeta_0 + \int_0^t \exp(-({A+\rho I)\tau}) Bh(X(x_0,0; \tau)) d\tau.
\end{aligned}
\end{equation}
The condition $\rho > -\min\{\mbox{Re}(\lambda_i(A))\}$ guarantees that the matrix $-(A+ \rho I)$ is Hurwitz. Invoking the boundedness of $X(x_0, 0;\tau) \in \mathcal{X} \subset \rea^n$ for all $\tau\ge 0$ from Assumption \ref{assm:1} and the smoothness of the function $h(\cdot)$, we conclude the boundedness of $\zeta(t)$ over time. This completes the proof.
\end{proof}

\begin{remark}\rm
For the system \eqref{NLsyst}, if Assumption \ref{assm:back} does not hold, one can still obtain a solution to the PDE \eqref{pde:T} by suitably modifying the system dynamics; see, for example, \cite[Eq. (6)]{BERAND}. Consequently, a solution to the PDE \eqref{pde} can also be constructed.
\hfill $\triangleleft$
\end{remark}

\begin{remark}\rm 
In the commutative diagram below, we show all the coordinate transformations used in the proof. Note that $\psi(\cdot,t)$ is a diffeomorphism for all $t\ge 0$. In Proposition \ref{prop:2} below, we will show that---under some conditions---the resulting mapping $\phi(\cdot,t)$ in \eqref{eq:phi} is injective in $x$ uniformly in time. 
\[
\begin{tikzcd}
x \arrow[rr, "{\phi(x,t)}"] \arrow[dr, "T({x,t})"'] & & z \\
& \xi \arrow[ur, "{\psi(\xi,t)}"']
\end{tikzcd}
\]
\hfill $\triangleleft$
\end{remark}

\begin{remark}\rm 
In terms of the above proposition, if we \emph{arbitrarily} choose a $C^1$ function of the output, $H(y,t)$, and integrate it over time as in \eqref{eq:dot_zeta}, then it is always possible to find a corresponding function $\phi(x,t)$ that yields an \emph{invariant foliation} $
    \{{(x, \zeta)\in \rea^n \times \rea^{n_z}}: \phi(x,t) = \zeta + \theta, \; \forall t \ge 0\}.
$
See Fig. \ref{fig:invariant} for a geometric illustration.
\hfill $\triangleleft$
\end{remark}

\begin{figure}[!htp]
    \centering

\begin{tikzpicture}[>=stealth,scale=1.2]
\draw[->] (0,0,0) -- (2,0,0) node[below] {};
\draw[->] (0,0,0) -- (0,1,0) node[left] {};
\draw[->] (0,0,0) -- (0,0,4) node[above] {};
\fill[blue!20,opacity=0.6] (0,0,1.5) to[out=0,in=180] (1.5,0,2.5) 
to[out=90,in=270] (1.5,1.5,2.8) to[out=180,in=0] (0,1.5,1.8) to[out=270,in=90] (0,0,1.5);
\draw[thick] (0,0,1.5) to[out=0,in=180] (1.5,0,2.5) to[out=90,in=270] (1.5,1.5,2.8)
to[out=180,in=0] (0,1.5,1.8) to[out=270,in=90] (0,0,1.5);
\node at (2.8,0.6,2.8) {$\phi(x,t) = \zeta(t)+\theta$};
  \draw[thick, blue, postaction={decorate},
decoration={markings, mark=at position 0.5 with {\arrow{stealth}}}] 
    (0.5,0,1.7) to[out=35,in=215] (0.8,0.3,2.0) 
    to[out=25,in=205] (1.0,0.6,2.2) 
    to[out=350,in=190] (1.2,0.8,2.4) 
    to[out=15,in=185] (1.0,1.4,2.6);
\fill[red!20,opacity=0.6] (0,0,3) to[out=0,in=180] (1.5,0,4) 
to[out=90,in=270] (1.5,1.5,4.3) to[out=180,in=0] (0,1.5,3.3) to[out=270,in=90] (0,0,3);
\draw[thick,dashed] (0,0,3) to[out=0,in=180] (1.5,0,4) to[out=90,in=270] (1.5,1.5,4.3)
to[out=180,in=0] (0,1.5,3.3) to[out=270,in=90] (0,0,3);
\node at (1.3,.6,4.5) {$\zeta(t)$};
\draw[thick, red, postaction={decorate},
decoration={markings, mark=at position 0.4 with {\arrow{stealth}}}] 
    (0.3,0,3.1) to[out=35,in=215] (0.7,0.3,3.4) 
    to[out=25,in=205] (0.9,0.6,3.6) 
    to[out=350,in=190] (1.1,0.8,3.8) 
    to[out=15,in=185] (.9,1.4,4.0);
\end{tikzpicture}
    \caption{Invariant foliation in PEBO 
    }
    \label{fig:invariant}
\end{figure}
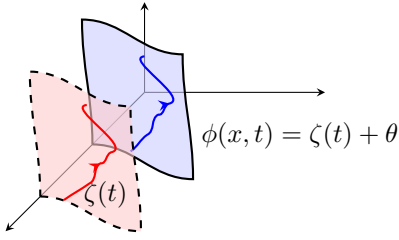

\begin{remark}
Following the above remark, consider the particular choice $H(y,t)=y$. 
In this case, the dynamic extension reduces to the integral action of the systems output
\(
\dot{\zeta} = y.
\) 
If the integral signal $\zeta(t)$ is injected into the closed loop, this is equivalent to injecting the signal $\phi(x(t),t) + \theta$ into the system. This interpretation provides an interesting perspective to understand integral control. \hfill $\triangleleft$
\end{remark}

\subsection{Left Invertibility}
In this subsection, we show that a suitable choice of the controllable pair $(A,B)$ and the nonlinear function $H(\cdot,t)$ can ensure that the mapping $\phi(x,t)$ is injective with respect to the state $x$---allowing the construction of its left inverse $\phil$ such that $\phil(\phi(x,t),t) =x$ for all $x\in \calx$.

We first specify the function $H$ so that it satisfies the following condition.

\begin{condition}
\rm \label{cond:1}
The function $H(\cdot,t)$ is a diffeomorphism for all $t\in[0,\infty)$.
\hfill $\triangleleft$
\end{condition}

Under the above condition, the mapping $\phi(\cdot, t)$ that we find in Proposition \ref{prop:1} can be shown to be injective, as summarized below.

\begin{proposition}[Injectivity of $\phi(x,t)$]
\rm\label{prop:2}
Consider Proposition \ref{prop:1} with
\begin{equation}
\label{eq:AB}
\begin{aligned}
A & = \begmat{\Lambda & \\ & \ddots & \\ && \Lambda}
\\
B & =
\begmat{ \mathbf{1}_{n+1} & \\ & \ddots & \\ &&  \mathbf{1}_{n+1}}
\\
\Lambda & = \diag(\lambda_1, \ldots, \lambda_{n+1}).
\end{aligned}
\end{equation}
Under {\em Assumptions} \ref{assm:1}-\ref{assm:back}, and selecting $H$ to satisfy {\em Condition} \ref{cond:1}, there exists a set $\cale \subset \mathbb{C}^{n+1}$ of zero Lebesgue measure such that for, any $(\lambda_1, \ldots, \lambda_{n+1}) \in \mathbb{C}^{n+1}_{<0}\backslash \cale$, the mapping $\phi(x,t)$ that is constructed in \eqref{eq:phi} is injective with respect to $x$ on $\cal X$ for all $t\ge t_\star$. 
\end{proposition}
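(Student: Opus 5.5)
Since $\phi(x,t)=e^{-At}T(x,t)$ and $e^{-At}$ is invertible for every $t$, injectivity of $\phi(\cdot,t)$ on $\calx$ is equivalent to injectivity of $T(\cdot,t)$. The plan is therefore to reduce the statement to the time-varying KKL injectivity result of \cite{BERAND}, which builds on the Andrieu--Praly argument \cite{ANDPRA}, applied to the system \eqref{NLsyst} with the alternative output $y_a=H(h(x),t)$.

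\textbf{Step 1: distinguishability of the new output.} By Condition~\ref{cond:1}, $H(\cdot,s)$ is injective for every $s$. Hence $Y_a(x_a,t;s)=Y_a(x_b,t;s)$ on $[t-t_\star,t]$ implies $Y(x_a,t;s)=Y(x_b,t;s)$ on that interval. Assumption~\ref{assm:2} then gives $x_a=x_b$, so the system with output $y_a$ is backward distinguishable on $\calx$ within $[0,t_\star]$.

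\textbf{Step 2: reduction to scalar components.} With the block structure \eqref{eq:AB}, $T$ splits into $p(n+1)$ scalar components indexed by $j=1,\dots,p$ and $i=1,\dots,n+1$:
\begin{equation*}
T_{j,i}(x,t)=\int_0^t e^{\lambda_i(t-s)}\,Y_{a,j}(x,t;s)\,ds .
\end{equation*}
When some $\lambda_i$ are complex, I will use real and imaginary parts, or equivalently allow $T$ to be complex valued. Fix $t\ge t_\star$ and $x_a\neq x_b$. Set $\delta_j(s):=Y_{a,j}(x_a,t;s)-Y_{a,j}(x_b,t;s)$ and substitute $\tau=t-s$. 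Then $T(x_a,t)=T(x_b,t)$ is equivalent to
\begin{equation*}
\int_0^t e^{\lambda_i\tau}\,\delta_j(t-\tau)\,d\tau=0,\qquad i=1,\dots,n+1,\; j=1,\dots,p .
\end{equation*}
This is a finite-interval Laplace transform. By Step~1, some $\delta_j$ is not identically zero on $[0,t_\star]$, hence not identically zero on $[0,t]$. The corresponding transform is then a nonzero holomorphic function of $\lambda$ on $\mathbb{C}_{<0}$, because an entire function of exponential type that vanishes identically forces $\delta_j\equiv 0$.

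\textbf{Step 3: measure-zero exceptional set.} Following the Andrieu--Praly argument, I would use the Coron lemma on the set $\Upsilon=\{(x_a,x_b)\in\calx^2: x_a\neq x_b\}$, which is an open set of dimension $2n$. The map $(x_a,x_b,\lambda)\mapsto \int_0^t e^{\lambda\tau}\delta(t-\tau)\,d\tau$ is holomorphic in $\lambda$ and, for each pair in $\Upsilon$, not identically zero in $\lambda$. Taking $n+1$ independent copies $\lambda_1,\dots,\lambda_{n+1}$, the dimension count $2(n+1)>2n$ (in real terms) shows that the set of $(\lambda_1,\dots,\lambda_{n+1})$ for which some pair is a common zero of all components has zero Lebesgue measure in $\mathbb{C}^{n+1}$. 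This is exactly the set $\cale$.

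\textbf{Main obstacle: uniformity in $t$.} The Coron lemma yields an exceptional set for each fixed $t$. I need a single $\cale$ that works for all $t\ge t_\star$. My first attempt is to add $t$ as an extra parameter on $\Upsilon\times[t_\star,\infty)$. This raises the real dimension to $2n+1$, still strictly below $2n+2$, so the counting argument of \cite[Lem. 3]{BERAND} should still apply. A second point to check is the required regularity, namely that the map is $C^1$ in $(x_a,x_b,t)$. This follows from the $C^1$ flow and Assumption~\ref{assm:back}, which guarantees that backward solutions exist on $[0,t]$. Finally, injectivity of $T(\cdot,t)$ transfers to $\phi(\cdot,t)$ through the invertible factor $e^{-At}$, which proves the claim.
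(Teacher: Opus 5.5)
Your proposal is correct and follows the paper's route. You reduce injectivity of $\phi(\cdot,t)$ to that of $T(\cdot,t)$ through the invertible factor $e^{-At}$, transfer backward distinguishability to the output $y_a=H(h(x),t)$ via Condition~\ref{cond:1}, and then apply the time-varying KKL injectivity result \cite[Theorem 3]{BERAND}; where the paper cites that theorem directly, you unpack its Coron-lemma argument with $t$ appended to the parameter set (real dimension $2n+1<2n+2$), which is sound once you take the open set $t>t_\star$ and add the fixed-$t$ null set for $t=t_\star$.
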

\begin{proof}
For the mapping 
$$
\phi(x,t) = \psi( T(x,t), t)= \exp(-At)  T(x,t),
$$
since $\exp(-At)$ is full rank for any finite time $t\in [0,\infty)$, the injectivity of $\phi(x,t)$ is equivalent to the one of $T(x,t)$ with respect to $x$, which remains to be verified in the sequel of the proof.

According to {\em Condition} \ref{cond:1}, $H(\cdot,t)$ is a diffeomorphism and thus the system with the available output
\begin{equation*}
    \dot x = f(x,t), \quad y_a = H(h(x),t)
\end{equation*}
inherits the backward-distinguishability of the given plant \eqref{NLsyst}. To see this, we have
\begin{equation*}
\begin{aligned}
    H& ( h(X(x_a, t;s)), s) = H (h(X(x_b), t;s), s ), \; \forall s\in [t-t_\star,t]
    \\
    & \iff 
    \quad h(X(x_a, t;s)) = h(X(x_b, t;s)),\; \forall s\in [t-t_\star,t] 
    \\
    & \implies \quad x_a = x_b.
\end{aligned}
\end{equation*}
Hence, the system with the new output $y_a$ is also backward-distinguishable. 

Following \cite[Theorem 3]{BERAND}, we obtain the injectivity of $T(x,t)$---equivalently of $\phi(x,t)$---for $t \ge t_\star$. This completes the proof.
\end{proof}

The injectivity property ensures the existence of a left-inverse mapping $\phil$ on the image space $\phi(\calx, t)$ satisfying $\phil(\phi(x,t),t)=x$ for all $x\in \calx$ and $t\ge t_\star$. Using the extended variable $\zeta$ defined in \eqref{eq:dot_zeta}, we naturally arrive at the nonlinear regression model
\begin{equation}
\label{eq:NLR}
y = h\circ \phil(\zeta+\theta, t),
\end{equation}
which is well defined for $t \in [t_\star, \infty)$. 

The remaining question is whether the parameter $\theta$ can be reconstructed from the above regressor using online data, which will be addressed in the next section.
%
\section{Parameter Identifiability}
\label{sec:4}
%
In this section, we discuss the fundamental question {\bf C2}, namely the identifiability of $\theta$ of the resulting nonlinear regression model \eqref{eq:NLR}.

To facilitate our analysis, we recall the definition of identifiability as follows.

\begin{definition}
\rm (\emph{Identifiability \cite{GEVetal}})
Given a regressor 
$$
\calh(t) = \Phi(\theta,t)
$$ 
where the signal $\calh(t)$ and the smooth function $\Phi: \rea^{n}\times [0,+\infty) \to \rea^m$ are known, it is called locally identifiable at a value $\theta_1$ over the interval $[t_1, t_2]$ (with $t_2>t_1$) if there exists $\delta >0$ such that, for all $\theta \in B(\theta_, \delta)$ 
\begin{equation}
    \Phi(\theta,t) = \Phi(\theta_1, t),~~\forall t \in [t_1,t_2] \quad \implies \theta_1 = \theta,
\end{equation}
where $B(\theta, \delta):= \{ z \in \rea^{n}: |z - \theta_1| < \delta \}$ denotes an open ball centered at $\theta_1$. If $\delta = +\infty$, we call it globally identifiable in $[t_1, t_2]$.
\hfill $\triangleleft$
\end{definition}

\subsection{Global Identifiablity Under Differentiable Observability}
\label{sec:4a}

To continue our analysis of identifiability, we assume that the output function $h(x)$ is sufficiently smooth in $x$. For convenience, given the system \eqref{NLsyst} we define the $k$-th order matrix 
\begin{equation}
    \calo_k (x,t):= \begmat{h(x) \\ 
   D_{f} h(x,t)
    \\
    \vdots
    \\
    D_f^kh(x,t)
    },
\end{equation}
where the differential operator $D_f$ is given by
$$
D_f(\cdot):= L_{f}(\cdot) + {\partial \over \partial t}(\cdot)
$$
and $L_f(\cdot)$ denotes the standard Lie derivative along the vector field $f$.

We are now in a position to present the main results on the identifiabilty properties of the nonlinear regressor \eqref{eq:regressor} arising in PEBO.

\begin{proposition}\rm \label{prop:local_id}
Consider Propositions \ref{prop:1}-\ref{prop:2}
under {\em Assumptions} \ref{assm:1} and \ref{assm:back}, {\em Condition} \ref{cond:1}, and with $(A,B)$ selected as \eqref{eq:AB} where $(\lambda_1, \ldots, \lambda_{n+1}) \in \mathbb{C}^{n+1}_{<0}\backslash \cale$. If there exist $k \in \mathbb{N}_+$ and an instant $t_c >0$ such that the matrix $\calo_k(x,t_c)$ is an injective immersion with respect to $x$ in $\clo(\calx)$, then the nonlinear regressor \eqref{eq:NLR} is globally identifiable on the set $\bigcap_{\tau\in[t_c,t]} \Omega_\tau$
within $[t_c, t]$ for any $t>t_c$, where 
$$
\Omega_\tau:=\{ \theta \in \rea^{n_z}: \theta +\zeta(\tau) \in \phi(\calx, \tau)   \}.
$$
In addition, the optimization problem
\begin{equation}
\label{eq:opt}
\begin{aligned}
    \hat\theta & \in  \underset{\hat\theta \in \bigcap_{\tau \in [t_c,t]} \Omega_\tau }{\arg\min}  ~J(\hat\theta),
\end{aligned}
\end{equation}
with the cost function
\begin{equation}
\label{eq:J}
    J(\hat\theta) = \int_{t_c}^{t} \left|y(\tau) - h\circ\phil(\xi(\tau)+\hat \theta,\tau) \right|^2 d\tau,
\end{equation}
has a global minimum at $\hat \theta = \theta$ for all $t> t_c$.
\end{proposition}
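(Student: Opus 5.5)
Suppose two parameter values $\theta_1,\theta_2\in\bigcap_{\tau\in[t_c,t]}\Omega_\tau$ generate the same output, i.e. $h\circ\phil(\zeta(\tau)+\theta_1,\tau)=h\circ\phil(\zeta(\tau)+\theta_2,\tau)$ for all $\tau\in[t_c,t]$. The plan is to convert this equality into an equality of initial states at time $t_c$, then invoke the injectivity of $\calo_k(\cdot,t_c)$, and finally use the injectivity of $\phi(\cdot,t_c)$ from Proposition \ref{prop:2} to return to $\theta$.

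First, since $\theta_i+\zeta(\tau)\in\phi(\calx,\tau)$, there exist unique (by Proposition \ref{prop:2}) points $x_i(\tau):=\phil(\zeta(\tau)+\theta_i,\tau)\in\calx$. We then show that $x_i(\cdot)$ is a trajectory of \eqref{NLsyst}. Because ${d\over d\tau}(\zeta+\theta_i)=\beta(y,\tau)$ and $\phi$ solves \eqref{pde}, the curve $X(x_i(t_c),t_c;\tau)$ satisfies $\phi(X(x_i(t_c),t_c;\tau),\tau)=\phi(x_i(t_c),t_c)+\int_{t_c}^\tau\beta(h(X(\cdot)),s)ds$. The subtle point is that $\beta$ is evaluated along the true output $y$, not along the output of the candidate trajectory, so we cannot immediately conclude that $\phi(X(x_i(t_c),t_c;\tau),\tau)=\zeta(\tau)+\theta_i$.

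The resolution uses the identification hypothesis itself: on $[t_c,t]$ the candidate output $h(x_i(\tau))$ coincides with the common value. For the true parameter $\theta$, this common value is $y(\tau)$; in general, $\theta_1$ and $\theta_2$ share one output signal $\bar y$. For $\theta_1=\theta$ the curve $x_1(\tau)=x(\tau)$ is the true trajectory, so $\bar y=y$. For $\theta_2$, we compare $\phi(x_2(\tau),\tau)$ with $\phi(x(\tau),\tau)$: the difference is the constant $\theta_2-\theta$. One has to show this forces $x_2(\tau)$ to be a trajectory. I would use injectivity of $\phi(\cdot,\tau)$ together with the PDE: the curve $\tilde x(\tau):=X(x_2(t_c),t_c;\tau)$ satisfies $\frac{d}{d\tau}\phi(\tilde x,\tau)=\beta(h(\tilde x),\tau)$, while $\frac{d}{d\tau}\phi(x_2,\tau)=\beta(y,\tau)=\beta(h(x_2),\tau)$. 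Then $w(\tau):=\phil(\cdot)$ satisfies a well-posed ODE. Indeed, wherever $\phi(\cdot,\tau)$ is an immersion we have $\dot x_2=(\partial_x\phi)^{\dagger}(\beta(h(x_2),\tau)-\partial_\tau\phi)=f(x_2,\tau)$ by \eqref{pde}, so $x_2=\tilde x$ by uniqueness. This step is the main obstacle, since it requires $\phil$ to be differentiable along the curve. I would try to obtain this from the immersion property of $T(\cdot,t)$ established in \cite{BERAND} for generic eigenvalues. Failing that, I would assume it explicitly.

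With both $x_1,x_2$ being solutions on $[t_c,t]$ that have identical outputs $h(x_1(\tau))=h(x_2(\tau))$, we differentiate $k$ times at $\tau=t_c^+$ (one-sided derivatives suffice since $t>t_c$). This gives $D_f^jh(x_1(t_c),t_c)=D_f^jh(x_2(t_c),t_c)$ for $j\le k$, i.e. $\calo_k(x_1(t_c),t_c)=\calo_k(x_2(t_c),t_c)$, and injectivity yields $x_1(t_c)=x_2(t_c)$. Consequently $\theta_1=\phi(x_1(t_c),t_c)-\zeta(t_c)=\theta_2$, which establishes global identifiability on $\bigcap_\tau\Omega_\tau$. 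For the optimization claim, $J\ge0$ and $J(\theta)=0$ because $y(\tau)=h\circ\phil(\zeta(\tau)+\theta,\tau)$ by \eqref{eq:NLR}; here $\xi$ in \eqref{eq:J} is read as $\zeta$. Hence $\theta$ is a global minimizer. It is moreover unique, since $J(\hat\theta)=0$ implies output equality a.e., hence everywhere by continuity, and the identifiability argument then gives $\hat\theta=\theta$.
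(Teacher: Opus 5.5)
There is a genuine gap, and it is the step you flag yourself: showing that $x_2(\tau)=\phil(\zeta(\tau)+\theta_2,\tau)$ is a solution of \eqref{NLsyst}. Your computation is correct once $x_2(\cdot)$ is $C^1$ and $\partial_x\phi(x_2(\tau),\tau)$ has full column rank. The hypotheses do not provide this. The eigenvalues are only taken outside $\cale$, the null set of Proposition~\ref{prop:2}, and that gives injectivity of $\phi(\cdot,\tau)$, not rank.

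It helps to see what is really needed. The curves $z_2:=\zeta+\theta_2$ and $\tilde z(\tau):=\phi(\tilde x(\tau),\tau)$ both solve $\dot z=\beta(h\circ\phil(z,\tau),\tau)$ with $z_2(t_c)=\tilde z(t_c)$. So you need uniqueness for this ODE, i.e.\ a Lipschitz bound on $\phil(\cdot,\tau)$ over $\phi(\clo(\calx),\tau)$. An injective immersion on a compact set gives this; mere injectivity does not (think of $\phil(z)=z^{1/3}$ and $\dot z=z^{1/3}$).

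Your fallback via \cite{BERAND} can be made to work, but it changes the statement. The immersion part of the $\calo_k$ hypothesis yields infinitesimal distinguishability over $[0,\tau]$ for $\tau\ge t_c$, provided backward trajectories stay in $\clo(\calx)$. Then, as in step 1) of the proof of Proposition~\ref{prop:id}, full rank of $\partial_x T$ holds only for eigenvalues outside a further null set $\cale_{\tt v}$, so $\cale$ must be enlarged to $\cale\cup\cale_{\tt v}$. Separately, invoking Proposition~\ref{prop:2} at all requires Assumption~\ref{assm:2} with $t_\star\le t_c$. The statement does not list it; the paper's step (i) extracts it from the $\calo_k$ condition, and you need such a step too.

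The paper's proof has the same skeleton: equal outputs, differentiate $k$ times, injectivity of $\calo_k$, then $\phi\circ\phil=\mathrm{id}$ on $\phi(\calx,\tau)$. It never confronts your obstacle. It passes directly from $y_{\theta_a}\equiv y_{\theta_b}$ to $\calo_k(\phil(\zeta+\theta_a,\tau),\tau)=\calo_k(\phil(\zeta+\theta_b,\tau),\tau)$, which tacitly treats both curves as solutions of \eqref{NLsyst}, and it does so for an arbitrary pair $\theta_a\neq\theta_b$.

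Your restriction to pairs containing the true $\theta$ is the right one. It is what the definition (identifiability at a value) and the optimization claim require. For a pair whose common output $\bar y$ is not identically $y$, neither curve can be a trajectory: that would force $\beta(\bar y,\tau)=\beta(y,\tau)$, which is impossible because $B$ has full column rank and $H(\cdot,\tau)$ is a diffeomorphism. Likewise, differentiating one-sidedly at $t_c^+$ keeps you inside $[t_c,t]$, where membership in $\Omega_\tau$ is actually assumed. The paper instead works on $[t_c-\Delta t,t_c]$ via Lemma~\ref{lem:embedding}, where that membership is not given.

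So your route is the more careful one. As written, though, it proves the proposition only under an additional immersion hypothesis on $\phi(\cdot,\tau)$ for $\tau\in[t_c,t]$, or with an enlarged exceptional eigenvalue set.
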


\begin{proof}
(i) From the continuity, we may find a sufficiently small $\Delta t>0$ such that $\calo_k(x,t)$ is an injective immersion in $x$ for $t\in [t_c -\Delta t, t_c] \subset [0, \infty)$; see, for example, Lemma \ref{lem:embedding}. 
Therefore, the system is also backward distinguishable within $[0,t_\star]$ for some $t_\star$ that satisfies\footnote{Selecting $t_\star$ sufficiently close to $t_c$ ensures that $\calo_k(x,t_\star)$ is injective in $x$. This implies the instantaneous distinguishability of the system \eqref{NLsyst}, and thus guarantees distinguishability in the sense of {\em Assumption} \ref{assm:2}.}
$$
0< t_\star< t_c - \Delta t.
$$
Therefore, we have verified the condition in Assumption \ref{assm:2}. 

According to Proposition \ref{prop:2}, the mapping $\phi(\cdot,t)$ is injective for all $t \in [t_\star, \infty)$, over which the left inverse mapping $\phil(\cdot,t)$ exists and then the cost function $J(\hat\theta)$ is well defined. From now on, we restrict our attention to the properties for the case $t \ge t_\star$. 
\begin{figure}[!ht]
\centering
\resizebox{.25\textwidth}{!}{%
\begin{circuitikz}
\tikzstyle{every node}=[font=\LARGE]
\draw [->, >=Stealth] (5.75,13) -- (13,13);
\node at (6,13) [circ] {};
\node at (12,13) [circ] {};
\node at (9,13) [ocirc] {};
\node at (10.75,13) [ocirc] {};
\node at (7.25,13) [circ] {};
\node [font=\LARGE] at (6,12.5) {$0$};
\node [font=\LARGE] at (7.25,12.5) {$t_\star$};
\node [font=\LARGE] at (8.75,12.5) {$t_c-\Delta t$};
\node [font=\LARGE] at (10.75,12.5) {$t_c$};
\node [font=\LARGE] at (12,12.5) {$t$};
\node at (9.5,13) [diamondpole, color=blue] {};
\node at (10,13) [diamondpole, color=blue] {};
\node [font=\LARGE, color={blue}] at (9.75,13.75) {$[t_1, t_2]$};
\node [font=\LARGE] at (10.25,13.75) {};
\end{circuitikz}
}%

\end{figure}

We now prove global identifiability via contradiction, by assuming that the resulting nonlinear regressor \eqref{eq:NLR} is not globally identifiable on
\begin{equation}
\cale_\Omega(t):=\bigcap_{\tau \in [t_c,t]}\Omega_\tau \subset \rea^{n_z}
\end{equation}
within $[t_\star,t]$ for $t> t_c$. For convenience, when clear we sometime omit the argument $t$. From Lemma \ref{lem:id}, it is straightforward to obtain that the system is not globally identifiable over the interval $[t_c - \Delta t, t_c]$; otherwise, this would imply identifiability on $[t_\star, t]$. Therefore, we may find $\theta_a \neq \theta_b$ both in the subset $\cale_\Omega$, and an interval 
$$
[t_1, t_2] \subset [t_c - \Delta t, t_c]
$$ 
with $|t_2-t_1|$ sufficiently small and $|t_2-t_1| < \Delta t$, such that
\begin{equation}
\label{eq:yayb}
    y_{\theta_a}(\tau) = y_{\theta_b}(\tau), \; \forall \tau \in [t_1,t_2],
\end{equation}
where we have used Lemma \ref{lem:id} again and, with a slight abuse of notation, introduced the notation 
$$
y_{\theta_a}(\tau):= h\circ \phil(\zeta(\tau) + \theta_a, \tau).
$$
As the signal $y(\cdot)$ is smooth, we differentiate both sides of \eqref{eq:yayb} with respect to time up to order $k$, which yields
\begin{equation*}
    \begmat{y_{\theta_a}(\tau) \\  \dot y_{\theta_a}(\tau) \\ \vdots \\ y^{(k)}_{\theta_a}(\tau)}
    =
    \begmat{y_{\theta_b}(\tau) \\ \dot y_{\theta_b}(\tau) \\ \vdots \\ y^{(k)}_{\theta_b}(\tau)}
    ,\quad 
    \forall \tau\in (t_1, t_2).
\end{equation*}
As a consequence, one obtains
\begin{equation}
\label{eq:O_k}
    \calo_k( \phil(\zeta+\theta_a, \tau),\tau) = \calo_k( \phil(\zeta+\theta_b, \tau),\tau), \quad \forall \tau \in (t_1, t_2).
\end{equation}
Since $\calo_k(\cdot,t)$ is injective in $x$ within $[t_c - \Delta t, t_c]$, together with $(t_1, t_2) \subset [t_c - \Delta t, t_c]$, we have
\begin{equation}
\label{eq:phil_k}
     \phil(\zeta(t)+\theta_a, t) =  \phil(\zeta(t)+\theta_b, t), \quad \forall t \in (t_1, t_2).
\end{equation}
For $\theta_a, \theta_b \in \cale_\Omega$, equivalently $\theta_a, \theta_b\in \Omega_\tau$ for all $\tau \in [t_c, t]$, we have 
$$
\zeta(\tau)+\theta_a \in \phi(\calx, \tau), \quad  \zeta(\tau) + \theta_b \in \phi(\calx, \tau).
$$

Considering $\phil\circ \phi = \mathbb{I}_d$ on $\phi(\calx,\tau)$,\footnote{Note that $
\phi(\phil(z,\tau), \tau) = z$ holds for $z \in \phi(\calx, \tau)
$ rather than the entire space $\rea^{n_z}$.} we have
\begin{equation*}
    \zeta(\tau) + \theta_a = \zeta(\tau) + \theta_b, \; \forall \tau\in (t_1, t_2) \quad \implies \quad \theta_a = \theta_b\in \cale_\Omega.
\end{equation*}
However, this contradicts to $\theta_a \neq \theta_b$. Therefore, we have verified that the nonlinear regressor \eqref{eq:NLR} is globally identifiable on the set $\cale_\Omega$.

(ii) Regarding the optimization problem \eqref{eq:opt}-\eqref{eq:J}, we note that
\begin{equation*}
\begin{aligned}
    J(\hat\theta) & \ge 0,\; \forall \hat\theta \in \rea^{n_z}
    \\
    J(\theta) & = 0.
\end{aligned}
\end{equation*}
The minimal value of $J$ is zero, and the zero-level set includes the vector $\theta$, i.e. 
$$
\theta \in J^{-1}(0):=\{\hat\theta \in \rea^{n_z}: J(\hat\theta) =0\}.
$$
One has
\begin{equation*}
\begin{aligned}
    J(\hat\theta) = 0 ~ & ~~~\iff  \quad y(\tau) - h\circ\phil(\xi+\hat \theta,\tau)  \equiv 0 , \; \forall \tau \in [t_c,t]
    \\
    &\underset{\hat\theta \in \bigcap_{\tau \in [t_c,t]}\Omega_\tau}{\implies} \quad \hat\theta = \theta,
\end{aligned}
\end{equation*}
where in the second implication we have used the global identifiability on the set 
$
\cale_\Omega
$
of the regressor over $[t_c,t]$. It completes the proof.
\end{proof}

Some discussions regarding the identifiability are in order.

\begin{remark}
\rm
In the above analysis, we rely on high-order time derivatives of the output. However, when solving the optimization problem \eqref{eq:opt}-\eqref{eq:J}, there is no need to use derivatives of the output $y$.
\hfill $\triangleleft$
\end{remark}

\begin{remark}
The injective immersion assumption on $\calo_k(x,t_c)$ implies the instantaneous distinguishability of the state $x$ at time $t_c$. Moreover, the mapping $\phi(\cdot, t_c)$ together with the relation $\phi(x(t_c), t_c) = \zeta(t_c) + \theta$ ensures the identifiability of the parameter $\theta$. This highlights the underlying idea behind the identifiability analysis. However, this result is mainly of conceptual interest, as it cannot be directly implemented numerically due to the lack of access to the time derivatives of the output $y$.
\hfill $\triangleleft$
\end{remark}

\begin{remark}
\rm
In the above result, we only require the injectivity of the matrix $\calo_k(x,t_\star)$ at a single moment $t_\star$. This is weaker than the concept of strong differential observability \cite[Def. 6.1]{BERetal22}, since we do not require the injectivity uniformly over time $t$.
\hfill $\triangleleft$
\end{remark}

\begin{remark}
\rm
The above condition on global identifiability is only sufficient. We may further relax the condition for identifiability of $\theta$. If $\calo_k(\cdot, t)$ is not injective for any single instance within $[0, t_\star]$, we fuse the historical measurements to identify $\theta$. 
\hfill $\triangleleft$
\end{remark}

\begin{remark}\rm\label{rem:omega}
Considering equation \eqref{eq:phil_k}, at a given instant $t$ there may exist a vector $\theta_a \neq \theta$ on $\rea^{n_{z}}$ such that
\(
\phi^{\tt L}(\zeta(t)+\theta_a, t) = \phi^{\tt L}(\zeta(t) + \theta, t).
\)
To handle this ambiguity, we restrict $\theta$ to the set $\Omega_t$. When we use the historical information on the set 
$\bigcap_{\tau\in[t_c,t]} \Omega_\tau$, the cost function $J(\cdot)$ may attain the same minimum over 
$\mathbb{R}^{n_z}$ and $\bigcap_{\tau\in[t_c,t]} \Omega_\tau$. We can check the injectivity of $g(\theta,\cdot)$ in an interval $(t_1,t_2)$; that is, there exist instants $\tau_j \in (t_1, t_2)$ with $j =1, \ldots,k$ such that the mapping
\[
\begin{bmatrix} 
g(\theta, \tau_1) \\ 
\vdots \\  
g(\theta, \tau_k) 
\end{bmatrix}
\]
is injective in $\theta$, with $g(\theta,t):= \phi^{\tt L}(\zeta(t)+ \theta, t)$. In such cases, it is unnecessary to compute the set 
$\Omega_t = \phi_t(\calx)$; we can simply solve the unconstrained optimization problem
\[
\underset{\hat\theta \in \mathbb{R}^{n_z}}{\min} ~J(\hat\theta)
\]
on the entire space $\rea^{n_z}$ to facilitate numerical caculations.
\hfill $\triangleleft$
\end{remark}


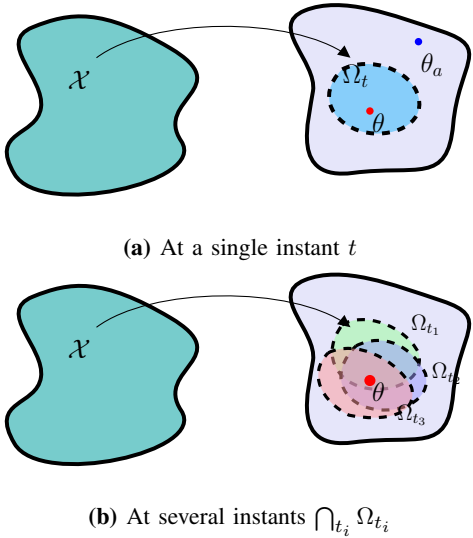
\begin{figure}[!htbp]
    \centering
    \begin{subfigure}[b]{0.49\textwidth}
        \centering
\begin{tikzpicture}[x=0.75pt,y=0.75pt,yscale=-1,xscale=1, global scale = .7]
\draw  [fill={rgb, 255:red, 119; green, 204; blue, 204 }  ,fill opacity=1 ][line width=1.5]  
(62,169.6) .. controls (89,158.6) and (112,155.6) .. (161,184.6) .. controls (210,213.6) and (149,221.6) .. (169,251.6) .. controls (189,281.6) and (68,307.6) .. (49,275.6) .. controls (30,243.6) and (64.88,248.85) .. (68,227.6) .. controls (71.13,206.35) and (35,180.6) .. (62,169.6) -- cycle ;

\draw  [fill={rgb, 255:red, 230; green, 230; blue, 250 }  ,fill opacity=1 ][line width=1.5]  
(258,157.6) .. controls (285,146.6) and (321,160.6) .. (357,172.6) .. controls (393,184.6) and (369,195.6) .. (365,239.6) .. controls (361,283.6) and (339,268.6) .. (293,274.6) .. controls (247,280.6) and (260.88,236.85) .. (264,215.6) .. controls (267.13,194.35) and (231,168.6) .. (258,157.6) -- cycle ;

\draw  [fill={rgb, 255:red, 135; green, 206; blue, 250 },fill opacity=1, line width=1.5, dashed]  
(285,200) .. controls (305,190) and (335,200) .. (340,220) .. controls (345,240) and (320,250) .. (300,245) .. controls (280,240) and (265,220) .. (285,200) -- cycle ;

\draw    (108,191.6) .. controls (147.2,162.2) and (248.83,161.23) .. (288.64,189.81) ;
\draw [shift={(291,191.6)}, rotate = 218.66] 
[fill={rgb, 255:red, 0; green, 0; blue, 0 }][line width=0.08] [draw opacity=0] (8.93,-4.29) -- (0,0) -- (8.93,4.29) -- cycle ;

\draw (86,199.4) node [anchor=north west][inner sep=0.75pt]  [font=\Large]  {$\calx$};
\draw (284,195.4) node [anchor=north west][inner sep=0.75pt]  [font=\Large]  {$\Omega_t
$};

\draw (305,230) node [anchor=north west][inner sep=1pt, font=\Large] {$\theta$};

\draw[fill=red, draw=none]  (305,230)  node[below=0.5em]{} circle(0.2em);

\draw[fill=blue, draw=none]  (340,180)  node[below=0.5em]{} circle(0.2em);

\draw (340,187) node [anchor=north west][inner sep=1pt, font=\Large] {$\theta_a$};
\end{tikzpicture}
        \caption{At a single instant $t$} 
        \label{fig:subfig1}
    \end{subfigure}
\hspace{-0.2cm}
%
    \begin{subfigure}[b]{0.49\textwidth}
        \centering
        
\begin{tikzpicture}[x=0.75pt,y=0.75pt,yscale=-1,xscale=1, global scale = .7]

\draw  [fill={rgb, 255:red, 119; green, 204; blue, 204 } ,fill opacity=1 ][line width=1.5]  
(62,169.6) .. controls (89,158.6) and (112,155.6) .. (161,184.6) .. controls (210,213.6) and (149,221.6) .. (169,251.6) .. controls (189,281.6) and (68,307.6) .. (49,275.6) .. controls (30,243.6) and (64.88,248.85) .. (68,227.6) .. controls (71.13,206.35) and (35,180.6) .. (62,169.6) -- cycle ;

\draw  [fill={rgb, 255:red, 230; green, 230; blue, 250 }  ,fill opacity=1 ][line width=1.5]  
(258,157.6) .. controls (285,146.6) and (321,160.6) .. (357,172.6) .. controls (393,184.6) and (369,195.6) .. (365,239.6) .. controls (361,283.6) and (339,268.6) .. (293,274.6) .. controls (247,280.6) and (260.88,236.85) .. (264,215.6) .. controls (267.13,194.35) and (231,168.6) .. (258,157.6) -- cycle ;

\draw  [fill=green!40, fill opacity=0.5, dashed, line width=1.2]  
(285,190) .. controls (300,180) and (330,190) .. (340,210) .. controls (345,230) and (320,240) .. (300,235) .. controls (280,230) and (270,210) .. (285,190) -- cycle ;

\draw  [fill=blue!40, fill opacity=0.5, dashed, line width=1.2]  
(295,205) .. controls (315,195) and (335,205) .. (345,225) .. controls (350,245) and (325,255) .. (305,250) .. controls (285,245) and (275,225) .. (295,205) -- cycle ;

\draw  [fill=red!40, fill opacity=0.5, dashed, line width=1.2]  
(275,210) .. controls (295,200) and (325,215) .. (335,235) .. controls (340,255) and (315,260) .. (290,255) .. controls (270,245) and (260,225) .. (275,210) -- cycle ;

\draw    (108,191.6) .. controls (147.2,162.2) and (248.83,161.23) .. (288.64,189.81) ;
\draw [shift={(291,191.6)}, rotate = 218.66] 
[fill={rgb, 255:red, 0; green, 0; blue, 0 }][line width=0.08] [draw opacity=0] (8.93,-4.29) -- (0,0) -- (8.93,4.29) -- cycle ;

\draw[fill=red, draw=none] (305,230) circle (3pt);

\draw (330,190) node [anchor=west, font=\large] {$\Omega_{t_1}$};
\draw (345,225) node [anchor=west, font=\large] {$\Omega_{t_2}$};
\draw (320,255) node [anchor=west, font=\large] {$\Omega_{t_3}$};

\draw (305,230) node [anchor=north west][inner sep=1pt, font=\Large] {$\theta$};

\draw (86,199.4) node [anchor=north west][inner sep=0.75pt]  [font=\Large]  {$\calx$};
\end{tikzpicture}
        \caption{At several instants $\bigcap_{t_i}\Omega_{t_i}$}
        \label{fig:subfig2}
    \end{subfigure}
    \caption{An illustration to Remark \ref{rem:omega}}
    \label{fig:twofigs1}
\end{figure}

\subsection{Local Identifiability}

In this section, we aim to establish less restrictive conditions to achieve local identifiability, rather than global one, for the nonlinear regressor \eqref{eq:NLR}.

Before presenting the assumption, we consider the (infinitesimal) variational system of the given plant \eqref{NLsyst} as
\begin{equation}
\label{eq:ltv}
\begin{aligned}
    	{d\over ds}\delta x = \underbrace{ {\partial f \over \partial x} (X(x,t;s) ,s) } _{:= A_\delta(s)} \delta x
	,
	\quad 
	\delta y = \underbrace{{\partial h \over \partial x} (X(x,t;s)) } _{C_\delta(s)}
	\delta x.
\end{aligned}
\end{equation}
with $\delta x \in T\rea^n$. Here, the variable $t$ should be understood as the initial moment, and thus we omit the dependence on $t$ for $A_\delta$ and $C_\delta$. We denote the solution to the state $\delta x \in T\rea^n$ at $t$ as $\delta X(x, \delta x, t; s)$ from the initial condition $\delta x$ at $t$, which is linearized along the solution $X(x,t;s)$. 

\begin{assumption}
\rm \label{assm:4}
(\emph{Infinitesimal distinguishability} \cite{AND}) For all $x\in \calx$, the linear time-varying (LTV) system \eqref{eq:ltv}, along the trajectory $X(x,t;s)$, is distinguishable on $\rea^n$ within $[0,t_\star]$. That is, for any $t\ge t_\star$ and any pair $(\delta x_a, \delta x_b) \in T\rea^n\times T\rea^n$, and $\forall x \in \calx$
\begin{equation}
\label{cond:inf_dis}
\begin{aligned}
        C_\delta(s)\delta X(x,\delta x_a,t;s)  = C_\delta(s) \delta X(x,\delta x_b,t;s) ,\; \forall s\in [t-t_\star,t] 
        \\
        \implies \quad 
        \delta x_a = \delta x_b
\end{aligned}
\end{equation}
with $C_\delta(s)={\partial h \over \partial x}(X(x,t;s))$.
\end{assumption}

\begin{remark}
\rm 
Note that Assumption \ref{assm:2} imposes  distinguishability on the original nonlinear system \eqref{NLsyst}. In contrast, Assumption \ref{assm:4} requires distinguishability of the associated infinitesimal variational system \eqref{eq:ltv} along the trajectories $X(x,t;s)$, for all $x \in \calx$, generated by \eqref{NLsyst}.
\hfill $\triangleleft$
\end{remark}

\begin{remark}
\rm 
The variational system \eqref{cond:inf_dis} and infinitesimal distinguishability are fundamentally important in contraction analysis \cite{FORSEP} and contracting observer synthesis \cite{SANPRA, YIetal2021}.
\hfill $\triangleleft$
\end{remark}

In the following proposition, we establish a connection between infinitesimal distinguishability and the local identifiability of the nonlinear regressor arising in PEBO.

\begin{proposition}
\rm \label{prop:id}
Consider Propositions \ref{prop:1}--\ref{prop:2} with
under {\em Assumptions} \ref{assm:1}--\ref{assm:2}, and \ref{assm:4}, and {\em Condition} \ref{cond:1}, with $(A,B)$ given by \eqref{eq:AB}. If the Gramian 
\begin{equation}
\label{eq:W}
     \int_{0}^{T} 
    \psi(x,\tau)^{\dagger\top} \left[ 
    {\partial h \over \partial x} (x,\tau)
    \right]^\top 
    {\partial h \over \partial x} (x,\tau)
    \psi(x,\tau)^\dagger
    d \tau 
\end{equation}
with $\psi(x,t):= \int_0^t e^{-As} B {\partial Y_a \over \partial x}(x,t;s) ds$ and $(\cdot)^\dagger$ the pseudo-inverse, 
is non-singular for some $T > t_\star$, then there exists a zero-Lebesgue measure set $\cale_{\tt u}$ such that for $(\lambda_1, \ldots, \lambda_{n+1}) \in \mathbb{C}_{<0}^{n+1}\setminus \cale_{\tt u}$ the regressor \eqref{eq:NLR} is (locally) identifiable in $\theta$.
\end{proposition}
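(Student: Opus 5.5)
The plan is to reduce local identifiability to a rank condition on the Jacobian of the regressor with respect to $\theta$, and then to link that Jacobian to the Gramian \eqref{eq:W} through the differential of $\phi$.

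\emph{Step 1: local injectivity of $\phi$.} By Proposition~\ref{prop:2}, $\phi(\cdot,t)$ is injective for $t\ge t_\star$ once $(\lambda_1,\ldots,\lambda_{n+1})\notin\cale$. For local identifiability I also need an injective \emph{differential}. I would apply the infinitesimal counterpart of \cite[Theorem 3]{BERAND} to the variational system \eqref{eq:ltv} with output $\frac{\partial H}{\partial y}\delta y$. Condition~\ref{cond:1} makes $\frac{\partial H}{\partial y}$ invertible, so Assumption~\ref{assm:4} carries over to this output. The conclusion is a zero-measure set $\cale_{\tt u}'$ such that, off $\cale_{\tt u}'$, the matrix $\frac{\partial T}{\partial x}(x,t)$ has full column rank $n$ for all $x\in\calx$ and $t\ge t_\star$. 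Since $\frac{\partial \phi}{\partial x}=e^{-At}\frac{\partial T}{\partial x}$, this matrix equals $\psi(x,t)$ up to the convention on $Y_a$. I then set $\cale_{\tt u}:=\cale\cup\cale_{\tt u}'$, which still has zero measure. Off $\cale_{\tt u}$, $\phi(\cdot,t)$ is an injective immersion, so $\phil(\cdot,t)$ is $C^1$ on the embedded submanifold $\phi(\calx,t)$, and along it
\[
\frac{\partial \phil}{\partial z}\big(\phi(x,t),t\big)=\psi(x,t)^\dagger .
\]
This identity holds on tangent directions; I would choose the extension of $\phil$ off the image so that it is the pseudo-inverse.

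\emph{Step 2: the Jacobian of the regressor.} Write $\Phi(\theta,\tau):=h\circ\phil(\zeta(\tau)+\theta,\tau)$ and take $x(\tau)=\phil(\zeta(\tau)+\theta,\tau)$. The chain rule gives
\[
\frac{\partial \Phi}{\partial\theta}(\theta,\tau)=\frac{\partial h}{\partial x}(x(\tau))\,\psi(x(\tau),\tau)^\dagger .
\]
Hence $\int_0^T \big(\frac{\partial \Phi}{\partial\theta}\big)^{\!\top}\frac{\partial \Phi}{\partial\theta}\,d\tau$ is exactly the Gramian \eqref{eq:W} evaluated along the true trajectory. The part of this integral over $[0,t_\star)$, where $\phil$ may be undefined, is handled either by restricting the integral to $[t_\star,T]$ or by noting that nonsingularity on $[0,T]$ together with continuity lets me shrink the window.

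\emph{Step 3: from the Gramian to local identifiability.} Suppose the Gramian is nonsingular. Define $\mathcal J(\vartheta):=\int |\Phi(\vartheta,\tau)-\Phi(\theta,\tau)|^2d\tau$. This function is $C^2$ near $\theta$, vanishes with zero gradient at $\vartheta=\theta$, and its Hessian at $\theta$ is twice the Gramian, which is positive definite. It follows that $\mathcal J(\vartheta)>0$ for all $\vartheta\neq\theta$ in some ball $B(\theta,\delta)$. Equivalently, $\Phi(\vartheta,\cdot)\equiv\Phi(\theta,\cdot)$ forces $\vartheta=\theta$ on that ball, which is local identifiability in the sense of the definition.

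\emph{Main obstacle.} I expect the hardest part to be Step 1, and specifically three issues within it:
\begin{itemize}
\item establishing the genericity, in $\lambda$, of the rank of $\partial T/\partial x$ \emph{uniformly} in $x$ and $t$;
\item handling the fact that $n_z>n$, so that $\theta$ has more components than $x$ and the directions of $\theta$ normal to $\phi(\calx,t)$ need care;
\item making the pseudo-inverse formula for $\partial\phil/\partial z$ rigorous.
\end{itemize}
For the first issue, my first attempt is the Coron-type argument of \cite{AND,BERAND}. For each $(x,v)$ with $|v|=1$, the map $\lambda\mapsto \frac{\partial T_\lambda}{\partial x}v$ is holomorphic, and it is not identically zero thanks to Assumption~\ref{assm:4}. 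Its zero set therefore has measure zero, and a dimension count over the $(2n-1)$-dimensional set of pairs $(x,v)$, against the $n+1$ copies of $\Lambda$, yields a zero-measure exceptional set. For the normal directions, I expect the nonsingularity hypothesis on \eqref{eq:W} itself to be what rules out those degenerate directions. Failing that, I would restrict $\theta$ to $\bigcap_\tau\Omega_\tau$ as in Proposition~\ref{prop:local_id}.
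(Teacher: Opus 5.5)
Your proposal follows the paper's proof essentially step for step. The paper also applies \cite[Theorem~3]{BERAND} to the variational system \eqref{eq:ltv}, with output map $H(h(x),t)$, whose infinitesimal distinguishability is preserved by Condition~\ref{cond:1}. It then uses Lemma~\ref{lem:3} to identify the resulting map with $\frac{\partial T}{\partial x}(x,t)\,\delta x$, so that $\frac{\partial\phi}{\partial x}=\psi$ has full rank off $\cale_{\tt u}=\cale_{\tt v}\cup\cale$. Next it writes $\frac{\partial\Phi}{\partial\theta}=\frac{\partial h}{\partial x}\,\psi^\dagger$, so the regressor Gramian is \eqref{eq:W}, and it concludes with a second-order expansion as in \cite[Prop.~3.1]{GEVetal}; your Hessian formulation is the same argument, stated more cleanly.

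One caveat concerns your fallback of shrinking the window ``by continuity'' from $[0,T]$ to $[t_\star,T]$. This does not work: positivity of $\int_0^T$ only survives removing a sufficiently short initial piece, not the fixed interval $[0,t_\star)$. Stating the Gramian condition on $[t_\star,T]$ is the honest fix. The paper's proof does not address this point either; it integrates over $[0,T]$, where $\phil$ need not exist.
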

\begin{proof}
1) Firstly, we show that, under Assumption \ref{assm:4}, the mapping $\phi(x,t)$ is an injective immersion, i.e. $\nabla_x \phi(x,t)$ is full rank for all $(x,t)$ and $\phi$ is injective. According to Proposition \ref{prop:2}, the mapping $\phi(\cdot,t)$ is injective for all $t \in [t_\star, \infty)$, over which the left inverse mapping $\phil(\cdot,t)$ exists, when selecting $(\lambda_1, \ldots, \lambda_{n+1}) \in \mathbb{C}_{<0}^{n+1}\setminus\cale$.

This step is similar to the proof of \cite[Prop. 3.6]{AND}. We now apply \cite[Theorem~3]{BERAND} to the variational system \eqref{eq:ltv}. This yields that, for the matrices $A$ and $B$ defined in \eqref{eq:AB}, there exists a set $\cale_{\tt v}$ of zero Lebesgue measure such that, for any $(\lambda_1, \ldots, \lambda_{n+1}) \in \mathbb{C}_{<0}^{n+1} \setminus \cale_{\tt v}$, the mapping
\begin{equation}
\begin{aligned}
      &\bar T(\delta x,x,t)
      \\
      &=  \int_0^t \exp(A(t-s)) B {\partial \tilde h \over \partial x}(X(x,t;s), t) \delta X(x,\delta x,t;s)  ds
\end{aligned}
\end{equation}
is injective in $\delta x$ within $[t_\star, \infty)$, 
with $\tilde h (x,t) := H(h(x),t)$ that does not change the infinitesimal distinguishability due to {\em Condition} \ref{cond:1}. 

Applying \eqref{eq:delta_X} in Lemma \ref{lem:3}, we have
$$
\begin{aligned}
&\bar T(\delta x,x,t)
      \\
      & = 
\int_0^t \exp(A(t-s)) B {\partial \tilde h \over \partial x}(X(x,t;s), t) {\partial X \over \partial x}(x,t;s)  ds
\cdot \delta x
    \\
      & 
= {\partial T \over \partial x}(x,t)\cdot  \delta x 
\end{aligned}
$$
Since $\bar T$ is linear in $\delta x$, thus the matrix $\nabla_x T(x,t)$ is full rank for $t> t_\star$. Therefore, $\nabla \phi(\cdot, t)$ is also full rank for $t> t_\star$. Clearly, defining 
$$
\cale_{\tt u}: = \cale_{\tt v} \cup \cale 
$$
we have that $\phi(\cdot,t)$ is an injective immersion for $t> t_\star$.

2) The second step of the proof is to verify that the Gramian
\begin{equation*}
    W_\Phi[0,T] := \int_0^{T} \left[ {\partial \Phi \over \partial \theta}(\theta,s)
    \right]
    ^\top 
    {\partial \Phi \over \partial \theta}(\theta,s)
    ds
\end{equation*}
with 
$$
\Phi(\theta,s):= h\circ \phil(\zeta(t)+ \theta, t),
$$
is full rank for the nonlinear regressor \eqref{eq:NLR}. To see this, we have
\begin{equation}
\begin{aligned}
    {\partial \Phi \over \partial \theta}(\theta,t) 
    =
    {\partial h \over \partial x}(x(t),t) {\partial \phil \over \partial z}( \phi(x,t), t).
\end{aligned}
\end{equation}
We note that $\nabla_z \phil$ is full rank around $\theta$ for all $t > t_\star$ from the inverse function theorem, i.e. ${\partial \phil \over \partial z}(\phi(x,t),t)  {\partial \phi \over \partial x}(x,t)
=
I_n$, with $\phi(x,t) = \theta +\zeta(t)$. Thus,
$$
\begin{aligned}
    {\partial \phil \over \partial z}(\phi(x,t),t) & = \left[{\partial \phi \over \partial x}(x,t) \right]^\dagger
    \\
    & =
    \left[ \int_0^t e^{-As} B {\partial Y_a \over \partial x}(x,t;s) ds \right]^\dagger
    \\
    &= \psi(x,t)^\dagger
\end{aligned}
$$
Therefore, one obtains that $W_\Phi[0,T]$ is given by \eqref{eq:W}, which is positive definite.

3) The last step is to verify the local identifiability of the nonlinear regressor \eqref{eq:NLR}, i.e.
$$
y(t) = \Phi(\theta, t).
$$
Similar to \cite[Prop. 3.1]{GEVetal}, for $\hat \theta$ sufficiently close to $\theta$, we have 
$$
\Phi(\hat\theta, t) = \Phi(\theta,t) + {\partial \Phi \over \partial \theta }(\theta,t) [\hat\theta - \theta] +
o\left(|\hat\theta -\theta|^2\right)
$$ 
with the high-order term $o(|\hat\theta-\theta|^2)$. Hence,
\begin{equation}
\begin{aligned}
    \int_0^T &\left|
    \Phi(\theta,s) - \Phi(\hat\theta,s)
    \right|^2 ds
    \\
    &=
    (\hat\theta -\theta)^\top W_\Phi(0, T) (\hat\theta - \theta)
    + 
    o\left(|\hat\theta - \theta|^3\right).
\end{aligned}
\end{equation}
Since $W_\Phi(0,T) \succ0$, from the definition, we have local identifiability of the nonlinear regressor \eqref{eq:NLR}.
\end{proof}

\section{Discussion}
\label{sec:5}
In this section we carry out some discussion on the derivations above and some possible extensions.\\

\noindent {\bf D1} As discussed in the introduction, in \cite{ORTetal21}, the idea of PEBO was extended to the so-called GPEBO, for which the canonical form is given by
\begin{equation}
    \dot z = A(u,y,t) z + \beta(u,y,t).
\end{equation}
By introducing the dynamic extensions 
\begalis{
\dot \zeta &= A(u,y,t)\zeta + \beta(u,y,t)\\
\dot \Omega &= A(u,y,t)\Omega,
} 
with $\Omega(0)=I$, the problem of estimating $z$ is reformulated as that of estimating the constant parameter $\theta := z(0) - \zeta(0)$. Along a related research line \cite{yi2024pebo, yi2023attitude, yi2022globally}, the same principle is employed to recast state estimation as a parameter identification problem; however, this is carried out on matrix Lie groups rather than in Euclidean spaces.\\

\noindent {\bf D2} In the presence of model mismatch and measurement noise, the ``parameter'' vector $\theta$ defined in \eqref{eq:phi_zeta_theta} should in fact be regarded as a (slowly) time-varying signal rather than a constant. In this setting, it is more appropriate to adopt online identification schemes, instead of the batch formulation in \eqref{eq:J}, which may suffer from error accumulation after a long period. However, online identification methods (e.g., gradient descent or recursive least squares) typically require stronger conditions, such as persistency of excitation (PE), rather than mere identifiability. See, for instance, \cite{CHOWetal,ORTetal22,WANetal24} for some recent efforts toward relaxing the PE condition for online parameter estimation. It is also promising to explore recent advances in online optimization \cite{LES,SCHetal}  within the PEBO framework to handle more complex regressors, such as the nonlinear and non-monotonic structure in \eqref{eq:NLR}.\\

\noindent {\bf D3}  Another approach to mitigate the fragility of PEBO---mainly caused by error accumulation---is to periodically reinitialize the observer every $T_r$ units of time. This idea is closely related to the preintegration approach \cite{LUPSUK, BARBON}. Some interesting and direct connections between preintegration and PEBO have been thoroughly discussed in \cite{yi2024imu}. In fact, IMU preintegration has nowadays become a standard tool in modern robotic navigation and localization.\\

\noindent {\bf D4} In the above analysis, the Gramian $W_\Phi[0,T]$ coincides with the Fisher information matrix when the noise covariance matrix is the identity matrix \cite[page 217]{LJU}.\\

\noindent {\bf D5}  Substituting \eqref{eq:delta_X} into the condition \eqref{cond:inf_dis}, the infinitesimal distinguishability can be equivalently characterized by the existence of a time $t \in [0, t_\star]$, for all $x \in \calx$ and all $\delta x \in T\rea^n$, such that
$$
\frac{\partial h}{\partial x}\bigl(X(x,0;t)\bigr)\, \delta X(x,\delta x,t) \neq 0.
$$

\noindent {\bf D6} In \cite{AND}, it is shown that distinguishability and infinitesimal distinguishability guarantee the existence of an exponentially convergent KKL observer for \emph{autonomous} systems. However, its convergence speed is \emph{not tunable}, mainly due to the presence of ``detectable but unobservable'' modes. In Proposition~\ref{prop:id}, we additionally impose the excitation \eqref{eq:W}. Under this condition, after $t_\star$, we are theoretically able to estimate $\theta$, and hence to reconstruct the unknown state immediately. In other words, stronger identifiability conditions lead to stronger performance guarantees.\\

\noindent {\bf D7} We emphasize that the proposed framework does \emph{not} yield an asymptotically convergent observer, as the state estimate is obtained by directly solving an optimization problem. On the other hand, in practical implementations, different constructions of the cost function $J$ are possible, such as formulations based on moving horizons or expanding horizons that progressively incorporate newly acquired information.\\

\noindent {\bf D8} For a linear time-varying system $(A(t), B(t), C(t))$, the associated identifiability is precisely the reconstructability of the initial condition $x(t_0)$. This property is equivalent to observability over some interval, without the requirement of uniformity with respect to time.

\section{Example}
\label{sec:6}

Consider the nonlinear system
\begin{equation}
\begin{aligned}
    \dot x & = \begmat{-x_1 \\ -x_2 + x_1^2}
    \\
    y & = x_2 + x_1^3.
\end{aligned}
\end{equation}
We have
$$
\begmat{\nabla_x h & \nabla_x (L_fh) }
=
\begmat{
3x_1^2 & -9x_1^2 + 2x_1\\ 1& -1
},
$$
which loses rank along the lines in the set 
$$
\Omega_l:=\{x\in \rea^2: x_1=0 ~\mbox{or}~ x_1 ={1\over3}\},
$$
independent of $x_2$. Thus, the given system is strongly differentially observable almost globally in $\rea^2$, and satisfies Assumption \ref{assm:2}.

Its solution is given by
$$
X(x,t;s) 
=
\begmat{
e^{-(s-t)}x_1
\\
e^{-(s-t)}x_2 + x_1^2\left( e^{-(s-t)}  - e^{-2(s-t)}\right)
}.
$$
To verify our main results, we simply select $B= \mathbf{1}_3$, $H(h(x),\cdot) = h(x)$, and $A = \diag\{-1,-2,-3\}$. Then, a feasible solution \eqref{eq:phi} to the PDE \eqref{pde} is\footnote{The above selection may result in an unbounded $\phi(x,t)$ as $t\to\infty$. This issue can be simply addressed by introducing the exponential decaying term $e^{-\rho t}$, as shown in Proposition \ref{prop:1}. For clarity, we omit this term here to highlight the core concept.}
\begin{equation}
 \phi(x,t) = P(t) \begmat{x_2\\ x_1^2 \\ x_1^3}   
\end{equation}
with
$$
P(t) =  
\begmat{
te^t & e^t+te^t - e^{2t} & {e^{3t}-e^t \over 2}
\\
e^{2t} - e^t & e^{2t} - e^t - te^{2t} & e^{3t} - e^{2t}
\\
{e^{3t}- e^t \over 2} &  {e^{3t}-e^t\over 2}-e^{3t}+e^{2t} & te^{3t}
}
$$
which is full rank except the instance $t=0$. Thus, the mapping $\phi(x,\cdot)$ is injective in $x$ for $t>0$, with the left inverse 
\begin{equation}
    x = \phil(z,t) 
= 
\begmat{
{e_3^\top P^{-1}(t) z \over e_2^\top P^{-1}(t) z}
\vspace{.3cm}
\\
e_1^\top P(t)^{-1} z
}.
\end{equation}

To avoid singularities and simplify the subsequent analysis, the observer implementation starts at $t_0=0.1$ s. Now, we can verify that $z= \phi(x,t)$ admits the dynamics 
$
\dot z = \beta(h(x),t), 
$
with $\beta(h(x),t)= e^{-At} Bh(x)$. Therefore, we now design the dynamic extension 
\begin{equation}
\dot \zeta = \beta(y,t), \quad \zeta(t_0)= \zeta_0
\end{equation}
with the initial condition $\zeta_0\in \rea^{n_z}$ that is selected by the user.

By substituting $x = \phil(z,t) = \phil(\zeta+\theta)$ into the output function $y=h(x,t)$, we get the nonlinearly parameterized regression model $ y = h(\phil(\zeta +\theta))$, i.e.
\begin{equation}
y =  e_1^\top P^{-1}(t) (\zeta(t)+\theta) + \left[ {e_3^\top P^{-1}(t) (\zeta(t)+\theta) \over e_2^\top P^{-1}(t) (\zeta(t)+\theta)} \right]^3
\end{equation}
which holds for $t>t_0$ with the constant vector $\theta := \phi(x(t),t) - \zeta(t) \in \rea^3$. Clearly, at a single moment $t$, it is impossible to uniquely determine $\theta$ globally in $\rea^3$. However, when considering two instants $t_1$ and $t_2$, this becomes possible as we have four equations of the parameter vector $\theta \in \rea^3$.

We have conducted simulations for the above nonlinear system over the time interval $[t_0, t_f]$ with $t_f= 1$s. The plant dynamics and the PEBO dynamic extension are integrated from $t_0 =0.1$ s, which corresponds to the time instant after which $\phi(\cdot,t)$ becomes injective. The output is uniformly sampled with a period of 0.1 ms.  

First, we illustrate the identifiability result established in Section~\ref{sec:4a}, showing that at a single time instant the parameter vector $\theta$ cannot be uniquely determined in $\mathbb{R}^{n_z}$. Specifically, we consider the time instant $t' = 0.2$~s and visualize the corresponding cost function $J(\theta) = |y(t') - h\circ \phil(\xi(t') + \theta, t')|^2$ in Fig. \ref{fig:J}, where $\theta_3$ is fixed to its true value $\theta_{\star,3}$. As shown in the figure, there exist multiple parameter values distinct from $\theta_\star$ that achieve zero cost, i.e., $J = 0$. In the second simulation, the unknown parameter vector $\theta$ is estimated using a single batch optimization over the entire data record collected on the interval $[0.1,\,1]$~s. The optimization problem is solved only once, at $t_f = 1$~s, using a derivative-free nonlinear optimization solver (MATLAB \texttt{fminsearch}). The simulation results are shown in Fig.~\ref{fig:pebo_batch}. For completeness, we also display the reconstructed state
\(
\hat{x} = \phil(\zeta + \hat{\theta}, t),
\)
which is not computed in real time. We observe that the parameter estimation error $\tilde{\theta}_i = \hat{\theta}_i - \theta_i$ is smaller than $7.5\times 10^{-4}$, and the true values are $\theta_\star = [3.72, 3.87, 4.02]^\top \times 10^{-3} $. Lastly, the unknown parameter vector $\theta$ is estimated using an expanding-horizon strategy. The estimation problem is solved every $0.1$~s using all data collected on the interval $[0.1,t]$~s, where $t$ denotes the current update time, with the previous estimate used as a warm start. The simulation results are shown in Fig.~\ref{fig:pebo_2}. We observe that the parameter estimates converge rapidly as the data window expands, and the reconstructed state $\hat{x}$ closely matches the true state after a short transient.

\begin{figure}[!htp]
    \centering
    \includegraphics[width=0.8\linewidth]{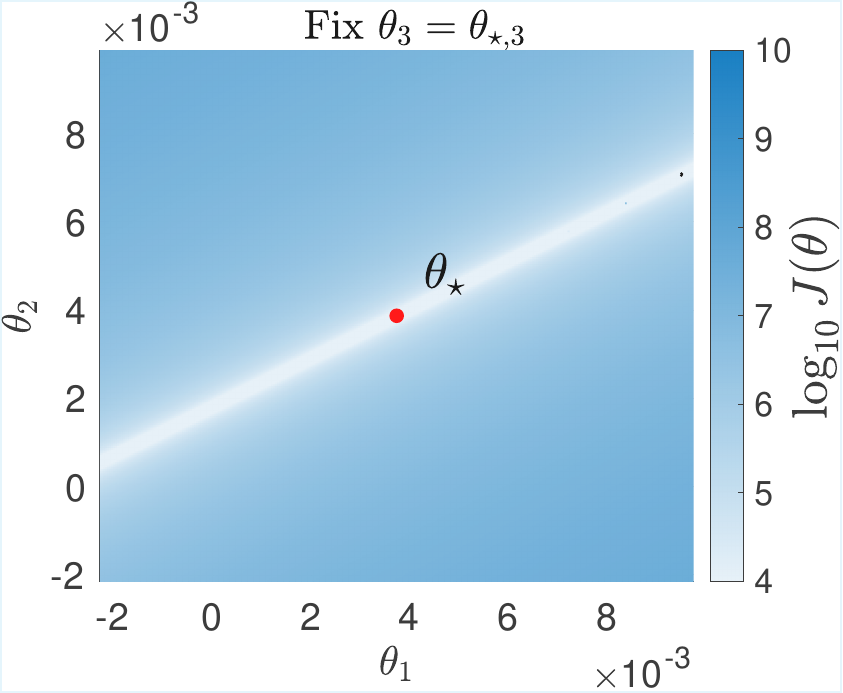}
    \caption{Single-time cost function illustrating lack of identifiability on $\rea^{n_z}$}
    \label{fig:J}
\end{figure}

\begin{figure*}[!htp]
\centering
\begin{minipage}[t]{0.49\textwidth}
\centering
\begin{tikzpicture}
\begin{axis}[
every axis plot/.append style={line width=1.8pt},
  width=\linewidth,
  height=5cm,
    xmin=0.1, xmax=1.0,     
  ymin=-1.1, ymax=1.1,     
  grid=both,
  xlabel={$t$ (s)},
  ylabel={states},
  legend style={font=\footnotesize, at={(0.35,0.98)}, legend columns=4, anchor=north west},
]
\addplot[
  color=blue!70!black!20, line width=3pt
] table [x=t, y=x1, col sep=comma] {figs/fig_state_recon.csv};
\addlegendentry{$x_1$}

\addplot[
  color=blue!70!black,
  dashed
] table [x=t, y=x1hat, col sep=comma] {figs/fig_state_recon.csv};
\addlegendentry{$\hat x_1$}

\addplot[
  color=red!80!black!20, line width=3pt
] table [x=t, y=x2, col sep=comma] {figs/fig_state_recon.csv};
\addlegendentry{$x_2$}

\addplot[
  color=orange!95!black,
  dashed
] table [x=t, y=x2hat, col sep=comma] {figs/fig_state_recon.csv};
\addlegendentry{$\hat x_2$}

\end{axis}
\end{tikzpicture}

{\footnotesize (a) State reconstruction}
\end{minipage}
\hfill
\begin{minipage}[t]{0.49\textwidth}
\centering
\begin{tikzpicture}
\begin{axis}[
  width=\linewidth,
  height=5cm,
  grid=both,
  xlabel={$t$ (s)},
  ylabel={$\tilde\theta_i$},
  xmin=0.95, xmax=1.05,
  scaled y ticks=true,
  ytick scale label code/.code={$\times 10^{-4}$},
  legend style={font=\footnotesize, at={(0.02,0.98)}, anchor=north west},
]
\addplot[
  blue!70!black!20,
  only marks,
  mark=*,
  mark size=2.2pt,
  restrict x to domain=0.999:1.001
] table [x=t, y=theta1tilde, col sep=comma] {figs/fig_theta_tilde.csv};
\addlegendentry{$\tilde\theta_1$}

\addplot[
  red!80!black!30,
  only marks,
  mark=*,
  mark size=2.2pt,
  restrict x to domain=0.999:1.001
] table [x=t, y=theta2tilde, col sep=comma] {figs/fig_theta_tilde.csv};
\addlegendentry{$\tilde\theta_2$}

\addplot[
  green!80!black!30,
  only marks,
  mark=*,
  mark size=2.2pt,
  restrict x to domain=0.999:1.001
] table [x=t, y=theta3tilde, col sep=comma] {figs/fig_theta_tilde.csv};
\addlegendentry{$\tilde\theta_3$}
\end{axis}
\end{tikzpicture}

{\footnotesize (b) Parameter estimation errors}
\end{minipage}

\caption{Simulation results: Using a single batch parameter estimate over the interval $[0.1,1.0]$\,s. The parameter vector $\theta$ is estimated only once at the final time $t_f = 1\,\mathrm{s}$ using all the collected data over the interval.} 
\label{fig:pebo_batch}
\end{figure*}
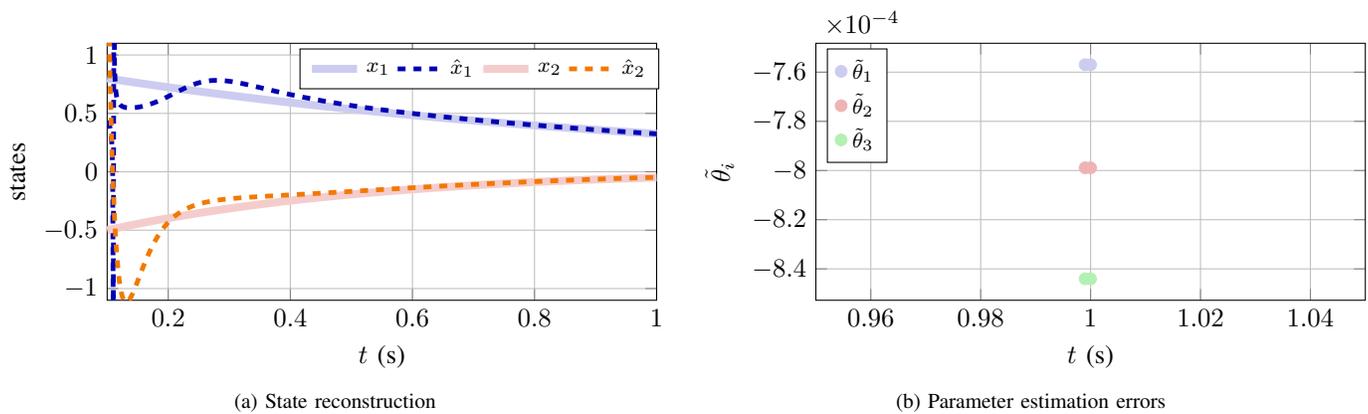

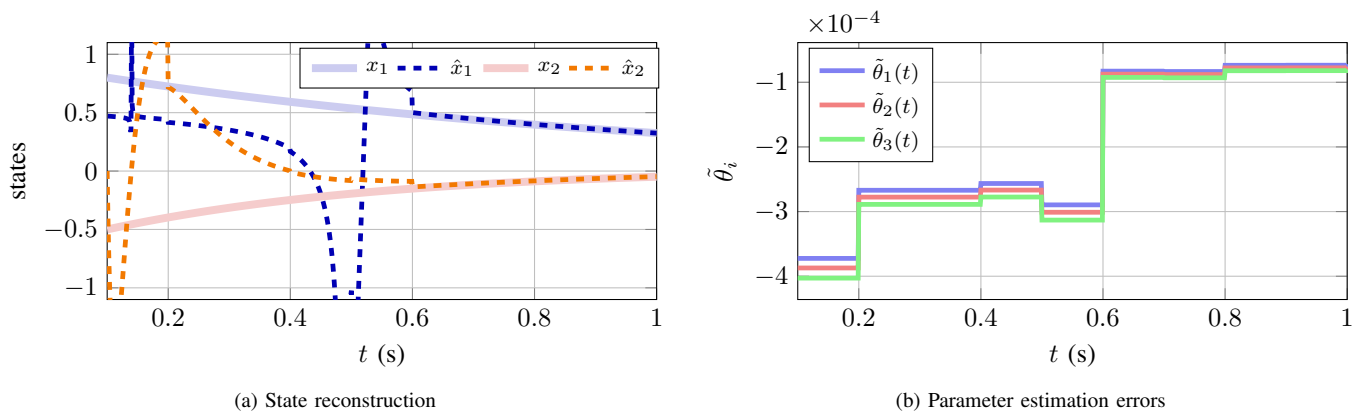
\begin{figure*}[!htp]
\centering
\begin{minipage}[t]{0.49\textwidth}
\centering
\begin{tikzpicture}
\begin{axis}[
every axis plot/.append style={line width=1.8pt},
  width=\linewidth,
  height=5cm,
    xmin=0.1, xmax=1.0,     
  ymin=-1.1, ymax=1.1,     
  grid=both,
  xlabel={$t$ (s)},
  ylabel={states},
  legend style={font=\footnotesize, at={(0.35,0.98)}, legend columns=4, anchor=north west},
]
\addplot[
  color=blue!70!black!20, line width=3pt
] table [x=t, y=x1, col sep=comma] {figs/gw_fig_state_recon.csv};
\addlegendentry{$x_1$}

\addplot[
  color=blue!70!black,
  dashed,
  line width=1.8pt,
  unbounded coords=discard
] table[
  x=t,
  col sep=comma,
  y expr={
    abs(\thisrow{x1hat})>2 ? nan : \thisrow{x1hat}
  }
] {figs/gw_fig_state_recon.csv};
\addlegendentry{$\hat x_1$}

\addplot[
  color=red!80!black!20, line width=3pt
] table [x=t, y=x2, col sep=comma] {figs/gw_fig_state_recon.csv};
\addlegendentry{$x_2$}

\addplot[
  color=orange!95!black,
  dashed
] table [x=t, y=x2hat, col sep=comma] {figs/gw_fig_state_recon.csv};
\addlegendentry{$\hat x_2$}

\end{axis}
\end{tikzpicture}

{\footnotesize (a) State reconstruction}
\end{minipage}
\hfill
\begin{minipage}[t]{0.49\textwidth}
\centering
\begin{tikzpicture}
\begin{axis}[every axis plot/.append style={line width=1.8pt},
  width=\linewidth,
  height=5cm,
  grid=both,
  xlabel={$t$ (s)},
  ylabel={$\tilde\theta_i$},
  xmin=0.1, xmax=1,
  scaled y ticks=true,
  ytick scale label code/.code={$\times 10^{-4}$},
  legend style={font=\footnotesize, at={(0.02,0.98)}, anchor=north west},
]
\addplot[
  blue!90!black!50,
] table [x=t, y=theta1tilde, col sep=comma] {figs/gw_fig_theta_tilde.csv};
\addlegendentry{$\tilde\theta_1(t)$}

\addplot[
  red!90!black!50,
] table [x=t, y=theta2tilde, col sep=comma] {figs/gw_fig_theta_tilde.csv};
\addlegendentry{$\tilde\theta_2(t)$}

\addplot[
  green!90!black!50,
] table [x=t, y=theta3tilde, col sep=comma] {figs/gw_fig_theta_tilde.csv};
\addlegendentry{$\tilde\theta_3(t)$}

\end{axis}

\end{tikzpicture}

{\footnotesize (b) Parameter estimation errors}
\end{minipage}

\caption{Simulation results: Using batch parameter estimates over expanding intervals $[0.1, t]$.}
\label{fig:pebo_2}
\end{figure*}



\section{Conclusion}
\label{sec:7}
This paper studied the existence of PEBOs for general nonlinear systems of the form \eqref{NLsyst}. By separating the PEBO design problem into transformability and identifiability, we provided explicit sufficient conditions under which each property holds. In particular, we showed that the associated PDE is generically solvable, that the injectivity of its solution depends on the distinguishability of the given plant, and that the identifiability of the parameter vector $\theta$ is closely related to suitable notions of observability. As a result, the paper provides a systematic framework for analyzing and designing PEBOs beyond case-by-case constructions. Nevertheless, the results are primarily of an existence nature, and numerical aspects and implementation issues remain important directions for future research. In particular,
\begin{itemize}
\item[\bf P1] The analytical construction of $\phi(\cdot,t)$ is not computationally efficient, as it requires explicit knowledge of the flow $X(x,t;s)$. Since the existence of such a mapping is guaranteed, it is natural to consider approximation-based approaches---such as deep learning models---to approximate the function $\phi(\cdot,t)$.

\vspace{.1cm}

\item[\bf P2] The optimization problem \eqref{eq:J} is highly nonconvex, and it would be interesting to study its properties (e.g., the form of least squares) in order to develop more suitable optimization solvers for the proposed framework.

\end{itemize}


\appendix

In Appendix, we provide some lemmata that are used in the proof.

\begin{definition}
We say that a given property of a map $\varphi(\cdot,0)$ is stable for any homotopy $\varphi(\cdot,t)$ of $\varphi(\cdot,0)$, there exists some $\epsilon>0$ such that for every $0<t<\epsilon$ also has that property. 
\end{definition}

\begin{lemma}[{\cite[page 35]{GUIPOL}}] 
\label{lem:embedding}
Embeddings of a compact manifold $X$ into a manifold $Y$ are stable under homotopy.
\end{lemma}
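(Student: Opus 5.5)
The plan is to use the standard characterization that, for a compact source manifold $X$, a smooth map $\varphi(\cdot,t):X\to Y$ is an embedding if and only if it is an injective immersion. Properness is automatic, since a continuous injection from a compact space into a Hausdorff space is a closed map and hence a homeomorphism onto its image. It therefore suffices to show that, for a smooth homotopy $\varphi:X\times[0,1]\to Y$ with $\varphi(\cdot,0)$ an embedding, both the immersion property and injectivity persist for all $t$ in some interval $[0,\epsilon)$. Here a homotopy is taken to be jointly smooth in $(x,t)$, as in \cite{GUIPOL}.

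\emph{Step 1: stability of immersions.} At each $x_0\in X$, $d_x\varphi(x_0,0)$ has full rank $\dim X$. In local charts this means some $\dim X\times\dim X$ minor of the Jacobian is nonzero. By continuity of the Jacobian in $(x,t)$, that minor stays nonzero on a product neighborhood $U_{x_0}\times[0,\epsilon_{x_0})$. Compactness of $X$ lets us cover $X$ by finitely many $U_{x_0}$; taking the minimum of the corresponding $\epsilon_{x_0}$ gives $\epsilon_1>0$ such that $\varphi(\cdot,t)$ is an immersion for all $t<\epsilon_1$.

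\emph{Step 2: stability of injectivity (main step).} I would argue by contradiction. Suppose there are $t_i\to0$ and points $x_i\neq y_i$ with $\varphi(x_i,t_i)=\varphi(y_i,t_i)$. By compactness, pass to a subsequence with $x_i\to x$ and $y_i\to y$. Continuity gives $\varphi(x,0)=\varphi(y,0)$, and injectivity of $\varphi(\cdot,0)$ then forces $x=y$. To rule this out, consider the augmented map
\[
G(x,t):=(\varphi(x,t),t).
\]
Its differential at $(x,0)$ is block triangular with diagonal blocks $d_x\varphi(x,0)$ and $1$, so $G$ is an immersion at $(x,0)$. By the local immersion theorem (equivalently, the inverse function theorem applied in charts), $G$ is injective on some neighborhood $V$ of $(x,0)$. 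For large $i$, both $(x_i,t_i)$ and $(y_i,t_i)$ lie in $V$ and satisfy $G(x_i,t_i)=G(y_i,t_i)$, so $x_i=y_i$, a contradiction. Hence there is $\epsilon_2>0$ such that $\varphi(\cdot,t)$ is injective for all $t<\epsilon_2$.

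Setting $\epsilon=\min\{\epsilon_1,\epsilon_2\}$ and combining with the opening characterization yields the claim. The delicate point is Step 2: pointwise continuity alone does not prevent two distinct points from colliding as they converge to the same limit. That is exactly why $G$ must include the time coordinate, so that local injectivity holds uniformly in $t$ near $0$. A boundary point $t=0$ of the parameter interval is handled by extending $\varphi$ smoothly to slightly negative $t$.
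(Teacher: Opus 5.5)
Your proof is correct. The paper gives no argument of its own and only cites Guillemin--Pollack, and your proof is essentially the standard argument behind that result: immersions persist by continuity of the Jacobian plus compactness, and injectivity persists by the sequence/contradiction argument in which $G(x,t)=(\varphi(x,t),t)$ is an immersion, hence locally injective, at the limit point $(x,0)$, with properness automatic from compactness. One small point: the extension across $t=0$ only needs to be local near $(x,0)$, and this is exactly what smoothness on $X\times[0,1]$ provides.
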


\begin{lemma}
\rm\label{lem:id}
Let $\cali_1 \subseteq \cali_2$ be two time intervals. If a regressor $\calh(t) = \Phi(\theta,t)$ is identifiable over $\cali_1$, then it is also identifiable within the interval $\cali_2$.
\end{lemma}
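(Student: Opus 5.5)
The claim is essentially a monotonicity property of the identifiability definition with respect to the observation window, so the plan is to argue directly from the definition rather than invoking any structural property of $\Phi$. I read the statement with $\cali_1 \subseteq \cali_2$ and the identifiability notion (local, with some radius $\delta>0$, or global, with $\delta=+\infty$) fixed at a given value $\theta_1$. The task is to show that the same $\delta$ that works for $\cali_1$ also works for $\cali_2$.

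The key step is a contrapositive inclusion of indistinguishability sets. For a fixed $\theta_1$ and an interval $\cali$, define
\[
\mathcal{N}(\cali) := \{\theta \in B(\theta_1,\delta) : \Phi(\theta,t) = \Phi(\theta_1,t),~\forall t \in \cali\}.
\]
Identifiability over $\cali$ with radius $\delta$ is exactly the statement $\mathcal{N}(\cali) = \{\theta_1\}$. Since $\cali_1 \subseteq \cali_2$, any $\theta$ whose output agrees with that of $\theta_1$ at every $t\in\cali_2$ agrees in particular at every $t \in \cali_1$. Hence $\mathcal{N}(\cali_2) \subseteq \mathcal{N}(\cali_1) = \{\theta_1\}$. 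Because $\theta_1 \in \mathcal{N}(\cali_2)$ trivially, we get equality $\mathcal{N}(\cali_2)=\{\theta_1\}$, which is identifiability over $\cali_2$ with the same $\delta$.

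The same argument carries over verbatim when identifiability is required on a restricted parameter set, such as $\cale_\Omega$ in Proposition~\ref{prop:local_id}. One simply intersects the ball with that set in the definition of $\mathcal{N}$, and the inclusion above is unaffected.

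The only potential obstacle is bookkeeping rather than mathematics. The definition in the paper is stated for closed intervals $[t_1,t_2]$, whereas the lemma is applied in the proof of Proposition~\ref{prop:local_id} to open subintervals $(t_1,t_2)$ and to intervals where $\Phi$ must be well defined (for instance $t\ge t_\star$, so that $\phil$ exists). I would therefore state explicitly that $\Phi(\cdot,t)$ is assumed defined for all $t\in\cali_2$. I would also note that the argument uses no continuity, so it holds for arbitrary index sets $\cali_1\subseteq\cali_2$, including open or half-open intervals.
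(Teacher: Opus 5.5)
Your proposal is correct and is essentially the paper's argument. The paper phrases it as ``for $\theta_a\neq\theta_b$ there is a distinguishing instant $t'\in\cali_1$, and that same $t'$ lies in $\cali_2$,'' which is the contrapositive of your inclusion $\mathcal{N}(\cali_2)\subseteq\mathcal{N}(\cali_1)$. Your version also makes explicit that the same radius $\delta$ carries over in the local case.
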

\begin{proof}
    The identifiability in $\cali_1$ is equivalent to that for $\theta_a \neq \theta_b$, we may find an instant $t'\in \cali_1$ such that 
    $$
    H(\theta_a, t') \neq H(\theta_b, t').
    $$
    On the other hand, we have $t'\in \cali_2$. In terms of the definition, it completes the proof.
\end{proof}

\begin{lemma}
\label{lem:3}
The solution of the variational system \eqref{eq:ltv} along $X(x,t;s)$ generated from \eqref{NLsyst} is given by
\begin{equation}
\label{eq:delta_X}
    \delta X(x, \delta x, t;s) = {\partial X \over \partial x}(x,t;s)\delta x.
\end{equation}
\end{lemma}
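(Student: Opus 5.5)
The plan is to show that both sides of \eqref{eq:delta_X} solve the same linear initial-value problem in $s$, and then conclude by uniqueness of solutions to linear ODEs. By definition, $\delta X(x,\delta x,t;s)$ is the unique solution of the LTV system \eqref{eq:ltv}, i.e.
$$
\frac{d}{ds}\delta X = \frac{\partial f}{\partial x}\big(X(x,t;s),s\big)\,\delta X, \qquad \delta X(x,\delta x,t;t)=\delta x .
$$
It therefore suffices to verify that $M(s):=\frac{\partial X}{\partial x}(x,t;s)\,\delta x$ obeys the same equation and the same initial value.

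For the initial condition, I use the flow identity $X(x,t;t)=x$ for every $x\in\calx$. Differentiating it in $x$ gives $\frac{\partial X}{\partial x}(x,t;t)=I_n$, so $M(t)=\delta x$.

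For the dynamics, I start from the defining relation $\frac{\partial}{\partial s}X(x,t;s)=f\big(X(x,t;s),s\big)$ and differentiate it with respect to $x$. Because $f$ is $C^1$, standard smooth-dependence results on initial conditions guarantee that $X$ is $C^1$ in $x$ and that the mixed partial $\frac{\partial^2 X}{\partial x\,\partial s}$ exists, is continuous, and equals $\frac{\partial}{\partial s}\frac{\partial X}{\partial x}$. Applying the chain rule on the right-hand side then yields the matrix variational equation
$$
\frac{\partial}{\partial s}\frac{\partial X}{\partial x}(x,t;s)=\frac{\partial f}{\partial x}\big(X(x,t;s),s\big)\frac{\partial X}{\partial x}(x,t;s)=A_\delta(s)\frac{\partial X}{\partial x}(x,t;s).
$$
Right-multiplying by the constant vector $\delta x$ shows that $M$ satisfies $\dot M=A_\delta(s)M$.

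To close the argument, note that $A_\delta(\cdot)$ is continuous in $s$, since $f$ is $C^1$ and the trajectory is continuous. The linear ODE therefore has a unique solution on the whole interval where $X(x,t;\cdot)$ is defined, and this covers forward times by Assumption~\ref{assm:1} and backward times by Assumption~\ref{assm:back}. Hence $M(s)=\delta X(x,\delta x,t;s)$.

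The only delicate step is justifying the interchange of $\partial_s$ and $\partial_x$, which is exactly the classical theorem on differentiability of flows with respect to initial data. I would cite that theorem rather than reprove it. Everything else follows directly from the chain rule and linear uniqueness.
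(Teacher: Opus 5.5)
Your proposal is correct and follows essentially the same route as the paper: both show that $\frac{\partial X}{\partial x}(x,t;s)\,\delta x$ satisfies the LTV system \eqref{eq:ltv} with value $\delta x$ at $s=t$, by interchanging $\partial_s$ and $\partial_x$ in $\partial_s X = f(X,s)$ and applying the chain rule, and then conclude by uniqueness of solutions. Your version additionally spells out what the paper leaves implicit: the classical differentiability-of-flows theorem that justifies the interchange, the continuity of $A_\delta$, and the existence of the flow on the relevant time interval.
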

\begin{proof}
 One has
$$
\begin{aligned}
{d\over ds} \left[{\partial X \over \partial x}(x,t;s) \delta x \right]
& = 
{\partial \over \partial x}\left( {\partial X \over \partial s} (x,t;s) \delta x \right)
\\
& = 
{\partial \over \partial x} \left(f(X(x,t;s),s) \delta x \right)
\\
& =
{\partial f \over \partial x}(X(x,t;s),s) 
\left[{\partial X \over \partial x}(x,t;s)\delta x\right].
\end{aligned}
$$
In other words, we have
\begin{equation}
\label{eq:pX}
\begin{aligned}
  {d\over ds} \left[{\partial X \over \partial x}(x,t;s) \delta x \right] & = A_\delta(s) \left[{\partial X \over \partial x}(x,t;s) \delta x \right]  
  \\
  {\partial X \over \partial x}(x,t;t) \delta x
  & = \delta x.
\end{aligned}
\end{equation}
By comparing the ``LTV'' systems \eqref{eq:ltv} and \eqref{eq:pX}, we obtain that the solution $\delta X(x, \delta x, t;s)$ is \eqref{eq:delta_X}.
\end{proof}


%
\begin{IEEEbiography}
[{\includegraphics[width=1.1in,height=1.25in,clip,keepaspectratio]{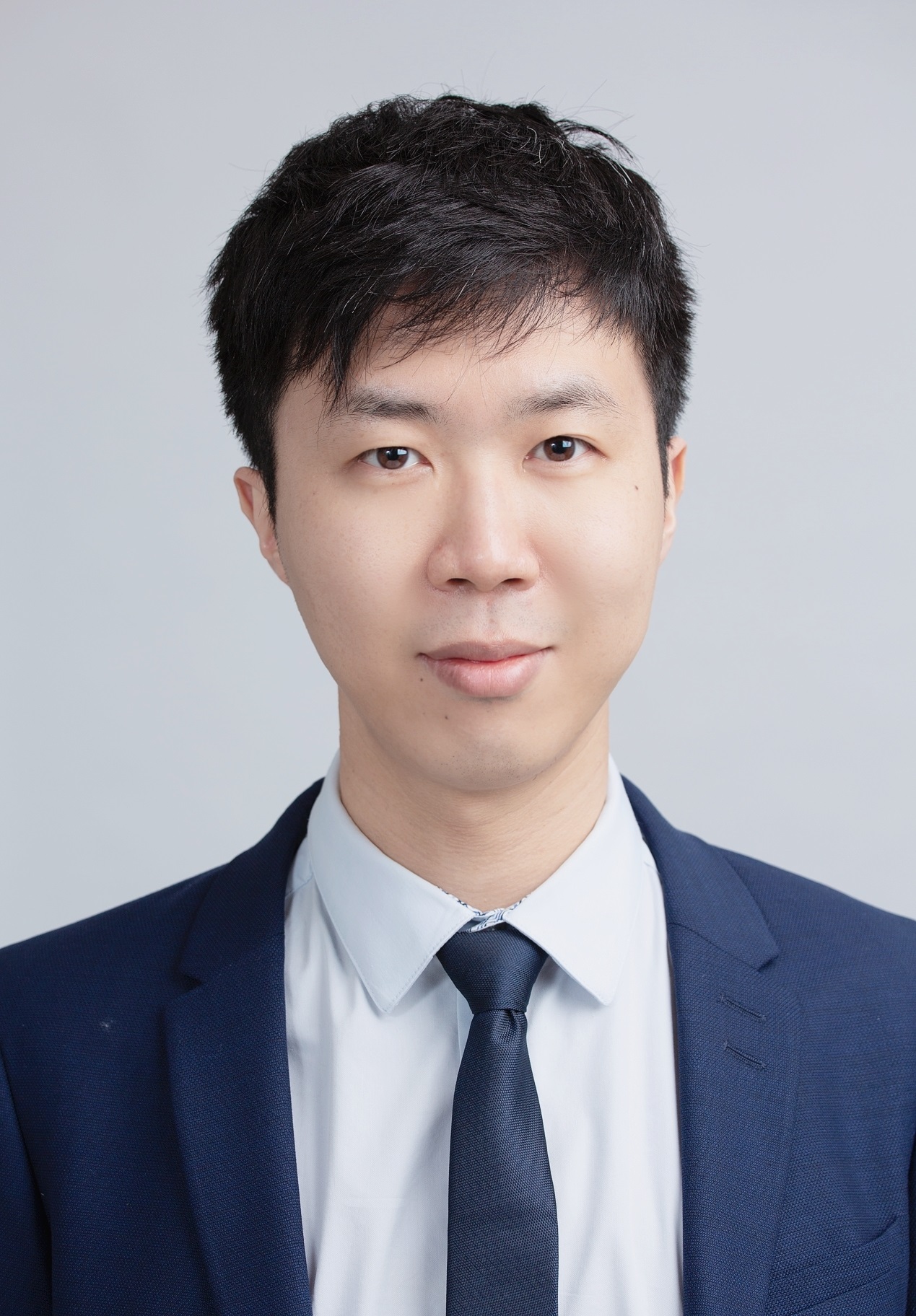}}]
{Bowen Yi} (Member, IEEE) obtained his Ph.D. degree in Control Engineering from Shanghai Jiao Tong University, China in 2019. 

Between 2017 and 2019, he was a visiting student at Laboratoire des Signaux et Syst\`emes, CNRS-CentraleSup\'elec, Gif-sur-Yvette, France. He has held postdoctoral positions at the Australian Centre for Fields Robotics (ACFR), The University of Sydney, NSW, Australia (2019 -- 2022), and the Robotics Institute, University of Technology Sydney, NSW, Australia (Sept. 2022 -- 2023). Currently, he is an Assistant Professor in the Department of Electrical Engineering, Polytechnique Montreal and is affiliated with GERAD (Groupe d'\'etudes et de recherche en analyse des d\'ecisions), Queb\'ec, Canada. His research interests involve nonlinear systems (estimation, control, and learning) and robotics. Dr. Yi was the recipient of the 2019 CCTA Best Student Paper Award from the IEEE Control Systems Society, and the Australian Research Council (ARC) Discovery Early Career Researcher Award (DECRA).
\end{IEEEbiography}

\begin{IEEEbiography}[{\includegraphics[width=1.1in,height=1.25in,clip,keepaspectratio]{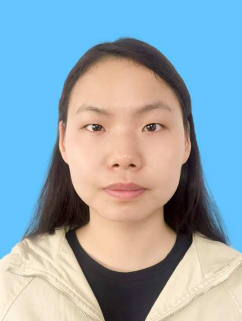}}]
{Leyan Fang} received the B.S. degree from Yanshan University, Qinhuangdao,
China, in 2020, and the M.S. degree from Harbin
Institute of Technology, Harbin, China, in 2022,
where she is currently pursuing the Ph.D. degree. She is now also a visiting PhD student with with Autonomous Technological Institute of Mexico (ITAM), Ciudad de M\'exico, Mexico. Her current research interests include nonlinear systems, adaptive control, and iterative learning control.
\end{IEEEbiography}

\begin{IEEEbiography}
[{\includegraphics[width=1.1in,height=1.25in,clip,keepaspectratio]{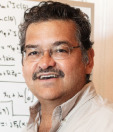}}]
{Romeo Ortega} (Life Fellow, IEEE) was born in Mexico. He obtained his B.SC. in Electrical and Mechanical Engineering from the National University of Mexico, Master of Engineering from Polytechnical Institute of Leningrad, USSR, and the the Docteur d'Etat from the Polytechnical Institute of Grenoble, France in 1974, 1978 and 1984 respectively.

He then joined the National University of Mexico, where he worked until 1989. He was a Visiting Professor at the University of Illinois in 1987--1988 and at McGill University in 1991--1992, and a Fellow of the Japan Society for Promotion of Science in 1990--1991. He was a member of the French National Research Council (CNRS) from June 1992 to July 2020, where was a ``Directeur de Recherche'' in the Laboratoire des Signaux et Syst\`emes (CentraleSup\'elec) in Gif-sur-Yvette, France. Currently, he is a full-time Professor at Autonomous Technological Institute of Mexico (ITAM), Ciudad de M\'exico, Mexico, and a Distinguished Professor of ITMO University, Saint Petersburg, Russia. His research interests are in the fields of nonlinear and adaptive control, with special emphasis on applications. 

Dr. Ortega has published five books and more than 400 scientific papers in international journals, with an H-index of 100. He has supervised 35 PhD thesis. He is the recipient of the ``Automatica Best Paper Award (2014--2016)'' and ``IFAC High Impact Paper Award 2026.'' He is a Fellow Member of the IEEE since 1999 (Life Fellow since 2020) and an IFAC Fellow since 2016. He has served as  Chairman in several IFAC and IEEE committees---including twice Chairman of the ``Automatica Best Paper Award Committee'' for the periods of 1996--1999 and 2009--2012. He has participated in various editorial boards of international journals. He is current the Editor-in-Chief for \emph{International Journal of Adaptive Control and Signal Processing} and the Senior Editor for \emph{Asian Journal of Control}.
\end{IEEEbiography}

\end{document}